\let\reftagform@=\tagform@
\def\tagform@#1{\maketag@@@{(\ignorespaces\textcolor{blue}{#1}\unskip\@@italiccorr)}}
\renewcommand{\eqref}[1]{\textup{\reftagform@{\ref{#1}}}}
\newtheorem{theorem}{Theorem}
\theoremstyle{plain}
\newtheorem{corollary}{Corollary}
\newtheorem{lemma}{Lemma}
\numberwithin{equation}{section}
\DeclareMathOperator{\spe}{sp}
\def\etal{et al.\,}
\begin{document}
    \title[Numerical radius inequalities  for Hilbert Space Operators ]
    {Refinements of Some Numerical radius inequalities   for Hilbert Space Operators }
    \author[M.W. Alomari]{Mohammad .W. Alomari}

    \address{Department of Mathematics, Faculty of Science and Information
        Technology, Irbid National University, 2600 Irbid 21110, Jordan.}
    \email{mwomath@gmail.com}

\date{\today}
\subjclass[2010]{Primary: 47A12, 47A30   Secondary: 15A60, 47A63.
}

\keywords{Numerical radius, Operator norm, mixed Schwarz
    inequality}
\begin{abstract}
In this work, some generalizations and refinements   inequalities
for the numerical radius of the product of Hilbert space operators
are proved. New inequalities for the numerical radius of block
matrices of   Hilbert space operators are also established.
\end{abstract}

    \maketitle
    \section{Introduction}

  Let $\mathscr{B}\left( \mathscr{H}\right) $ be the Banach algebra
  of all bounded linear operators defined on a complex Hilbert space
  $\left( \mathscr{H};\left\langle \cdot ,\cdot \right\rangle
  \right)$  with the identity operator  $1_\mathscr{H}$ in
  $\mathscr{B}\left( \mathscr{H}\right) $.

  For a bounded linear operator $T$ on a Hilbert space
  $\mathscr{H}$, the numerical range $W\left(T\right)$ is the image
  of the unit sphere of $\mathscr{H}$ under the quadratic form $x\to
  \left\langle {Tx,x} \right\rangle$ associated with the operator.
  More precisely,
  \begin{align*}
  W\left( T \right) = \left\{ {\left\langle {Tx,x} \right\rangle :x
    \in \mathscr{H},\left\| x \right\| = 1} \right\}.
  \end{align*}
  Also, the numerical radius is defined to be
  \begin{align*}
  w\left( T \right) = \sup \left\{ {\left| \lambda\right|:\lambda
    \in W\left( T \right) } \right\} = \mathop {\sup }\limits_{\left\|
    x \right\| = 1} \left| {\left\langle {Tx,x} \right\rangle }
  \right|.
  \end{align*}

  The spectral radius of an operator $T$ is defined to be
  \begin{align*}
  r\left( T \right) = \sup \left\{ {\left| \lambda\right|:\lambda
    \in \spe\left( T \right) } \right\}.
  \end{align*}

  We recall that,  the usual operator norm of an operator $T$ is defined to be
  \begin{align*}
  \left\| T \right\| = \sup \left\{ {\left\| {Tx} \right\|:x \in H,\left\| x \right\| = 1} \right\},
  \end{align*}
  and
  \begin{align*}
  \ell \left( T \right): &= \inf \left\{ {\left\| {Tx} \right\|:x
    \in \mathscr{H},\left\| x \right\| = 1} \right\}
  \\
  &=      \inf \left\{ {\left|\left\langle {Tx,y} \right\rangle \right|:x,y \in
    \mathscr{H},\left\| x \right\| =\left\| y \right\|= 1} \right\}.
  \end{align*}
 It's well known that   the numerical radius is not submultiplicative, but it is satisfies  $w(TS)\le 4w\left(T\right) w\left(S\right)$ for all $T,S\in \mathscr{B}\left(  \mathscr{H}\right)$. In particular if $T,S$ are commute,  then $w(TS)\le 2w\left(T\right) w\left(S\right)$. Moreover, if $T,S$ are normal  then $w\left(\cdot\right)$ is submultiplicative $w(TS)\le w\left(T\right) w\left(S\right)$. Denote  $|T|=\left(T^*T\right)^{1/2}$  the absolute value of the operator $T$. Then we have $w\left(|T|\right) = \|T\|$. It's convenient to mention  that, the numerical radius norm is weakly unitarily invariant; i.e., $w\left(U^*TU\right) = w\left(T\right)$  for all unitary $U$. Also, let us  not miss the chance to mention the important property that  $w\left(T\right) = w\left(T^*\right)$  and $w\left(T^*T\right) = w\left(TT^*\right)$ for every $T\in \mathscr{B}\left( \mathscr{H}\right)$.

  A popular problem is the following: does the numerical radius of the product of operators commute, i.e., $w(TS)= w\left(ST\right)$  for any operators $T,S\in \mathscr{B}\left(\mathscr{H}\right)$?

 This problem has been given serious attention by many authors and in several resources (see \cite{G}, for example). Fortunately, it has been shown recently that,  for one of such  operators must be a multiple of a unitary operator, and we need  only to check $w\left(TS\right)=w\left(ST\right)$ for all rank one operators $S\in \mathscr{B}\left( \mathscr{H}\right)$ to arrive at the conclusion. This fact was proved by Chien \etal  in \cite{CGLTW}. For other related problems involving  numerical ranges and radius see \cite{CGLTW} and \cite{CKN}
 as well as the elegant work of Li \cite{LTWW} and the  references
 therein. For more classical and recent properties of  numerical range  and radius, see \cite{CGLTW} \cite{CKN}, \cite{LTWW} and the comprehensive books \cite{B},  \cite{H1} and \cite{H2}.

  On the other hand, it is well known that $w\left(\cdot\right)$ defines an operator norm on $\mathscr{B}\left( \mathscr{H}\right) $ which is equivalent to operator norm $\|\cdot\|$. Moreover, we have
  \begin{align}
  \frac{1}{2}\|T\|\le w\left(T\right) \le \|T\|\label{eq1.1}
  \end{align}
  for any $T\in \mathscr{B}\left( \mathscr{H}\right)$. The inequality is sharp.

  In 2003, Kittaneh \cite{FK1}  refined the right-hand side of \eqref{eq1.1}, where he proved that
  \begin{align}
  w\left(T\right) \le \frac{1}{2}\left(\|T\|+\|T^2\|^{1/2}\right)\label{eq1.2}
  \end{align}
  for any  $T\in \mathscr{B}\left( \mathscr{H}\right)$.

After that in 2005, the same author in \cite{FK} proved that
  \begin{align}
  \frac{1}{4}\|A^*A+AA^*\|\le  w^2\left(A\right) \le \frac{1}{2}\|A^*A+AA^*\|.\label{eq1.3}
  \end{align}
  The inequality is sharp. This inequality was also reformulated and generalized in \cite{EF} but in terms of Cartesian
decomposition.

  In 2007, Yamazaki \cite{Y} improved \eqref{eq1.1} by proving that
  \begin{align}
  w\left( T \right) \le \frac{1}{2}\left( {\left\| T \right\| + w\left( {\widetilde{T}} \right)} \right) \le \frac{1}{2}\left( {\left\| T \right\| + \left\| {T^2 } \right\|^{1/2} } \right),\label{eq1.4}
  \end{align}
  where $\widetilde{T}=|T|^{1/2}U|T|^{1/2}$ with unitary $U$.

  In 2008, Dragomir \cite{D4} used Buzano inequality to improve \eqref{eq1.1}, where he proved that
  \begin{align}
  w^2\left( T \right) \le \frac{1}{2}\left( {\left\| T \right\| + w\left( {T^2} \right)} \right). \label{eq1.5}
  \end{align}
  This result was also recently generalized by Sattari \etal in \cite{SMY}.

  This work, is divided into four sections, after this introduction, in Section \ref{sec2}, we recall some well-known inequalities for bounded linear operators. In Section \ref{sec3}, some generalizations and refinements of the numerical radius inequalities are proved. In Section \ref{sec4}, new refinement inequalities for the numerical radius of $n\times n$  Hilbert space operator   matrices    are established.

\section{Lemmas}\label{sec2}
In order to prove our results we need  a sequence of lemmas.
\begin{lemma}
\label{lemma1}  We have
    \begin{enumerate}
        \item The Power-Mean inequality
        \begin{align}
        a^\alpha  b^{1 - \alpha }  \le \alpha a + \left( {1 - \alpha } \right)b \le \left( {\alpha a^p  + \left( {1 - \alpha } \right)b^p } \right)^{\frac{1}{p}},   \label{PMI}
        \end{align}
        for all  $\alpha \in \left[0,1\right]$, $a,b\ge0$ and $ p\ge1$.

        \item The Power-Young inequality
        \begin{align}
        \label{YI}ab \le \frac{{a^{\alpha} }}{\alpha} + \frac{{b^{\beta} }}{\beta} \le \left( {\frac{{a^{p\alpha} }}{\alpha} + \frac{{b^{p\beta} }}{\beta}} \right)^{\frac{1}{p}}
        \end{align}
        for all $a,b\ge0$ and $\alpha,\beta>1$ with $\frac{1}{\alpha}+\frac{1}{\beta}=1$ and all $p\ge1$.
    \end{enumerate}
\end{lemma}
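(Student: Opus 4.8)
The plan is to derive everything from two elementary convexity facts, so that both parts reduce to a single mechanism applied twice. For part (1), the left-hand inequality $a^{\alpha}b^{1-\alpha}\le \alpha a+(1-\alpha)b$ is the weighted arithmetic--geometric mean inequality; I would obtain it from the concavity of the logarithm, noting that
\[
\log\left(\alpha a+(1-\alpha)b\right)\ge \alpha\log a+(1-\alpha)\log b=\log\left(a^{\alpha}b^{1-\alpha}\right)
\]
whenever $a,b>0$, and then exponentiating, with the boundary cases $a=0$, $b=0$, $\alpha\in\{0,1\}$ checked directly. For the right-hand inequality $\alpha a+(1-\alpha)b\le\left(\alpha a^{p}+(1-\alpha)b^{p}\right)^{1/p}$ I would invoke convexity of $t\mapsto t^{p}$ on $[0,\infty)$ for $p\ge 1$: Jensen's inequality gives $\left(\alpha a+(1-\alpha)b\right)^{p}\le \alpha a^{p}+(1-\alpha)b^{p}$, and taking $p$-th roots, which preserves order on the nonnegative reals, yields the claim.

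For part (2), I would deduce both inequalities from part (1) by the substitution that converts AM--GM into Young's inequality. Writing $\lambda=1/\alpha$ and $\mu=1/\beta$, the hypothesis $\frac1\alpha+\frac1\beta=1$ says exactly that $\lambda,\mu\in(0,1)$ with $\lambda+\mu=1$, so part (1) applies with weight $\lambda$ and with the points $a^{\alpha}$ and $b^{\beta}$ in place of $a$ and $b$. The left inequality of part (1) then reads
\[
\left(a^{\alpha}\right)^{\lambda}\left(b^{\beta}\right)^{\mu}\le \lambda a^{\alpha}+\mu b^{\beta},
\]
and since $\left(a^{\alpha}\right)^{1/\alpha}\left(b^{\beta}\right)^{1/\beta}=ab$, this is precisely Young's inequality $ab\le \frac{a^{\alpha}}{\alpha}+\frac{b^{\beta}}{\beta}$. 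The right inequality of part (2) comes out the same way from the right inequality of part (1): with $u=a^{\alpha}$ and $v=b^{\beta}$ one has
\[
\lambda u+\mu v\le\left(\lambda u^{p}+\mu v^{p}\right)^{1/p},
\]
which after restoring $u$ and $v$ is exactly $\frac{a^{\alpha}}{\alpha}+\frac{b^{\beta}}{\beta}\le\left(\frac{a^{p\alpha}}{\alpha}+\frac{b^{p\beta}}{\beta}\right)^{1/p}$.

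Since every step is a direct appeal to convexity or a change of variables, there is no genuine analytic obstacle here; the only points requiring a little care are the degenerate cases in part (1) where the logarithm is undefined (vanishing $a$ or $b$, or $\alpha\in\{0,1\}$), and the verification that the choice $\lambda=1/\alpha$, $\mu=1/\beta$ correctly matches the normalization $\lambda+\mu=1$ demanded by part (1). Both are routine, so the bulk of the work is simply recording the two Jensen estimates cleanly and then performing the substitution twice.
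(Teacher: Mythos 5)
Your proposal is correct and complete. Note that the paper itself offers no proof of Lemma~\ref{lemma1}: it is recorded in Section~\ref{sec2} as standard background, so there is no argument to compare against. Your derivation is exactly the classical one — weighted AM--GM from concavity of $\log$ (with the degenerate cases $a=0$, $b=0$, $\alpha\in\{0,1\}$ handled separately), Jensen's inequality for the convex map $t\mapsto t^{p}$, $p\ge 1$, for the right-hand bound, and the substitution $u=a^{\alpha}$, $v=b^{\beta}$ with weights $\lambda=1/\alpha$, $\mu=1/\beta$ (which sum to $1$ precisely by the conjugacy hypothesis) to convert both halves of part (1) into the two halves of part (2) — and every step checks out, including the identity $(a^{\alpha})^{1/\alpha}(b^{\beta})^{1/\beta}=ab$ and the exponent bookkeeping $u^{p}=a^{p\alpha}$, $v^{p}=b^{p\beta}$ in the final display.
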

\begin{lemma} {\rm{(The McCarty inequality).}}
    \label{lemma2}  Let  $A\in \mathscr{B}\left( \mathscr{H}\right)^+ $, then
    \begin{align}
    \left\langle {Ax,x} \right\rangle ^p  \le \left\langle {A^p x,x} \right\rangle, \qquad p\ge1, \label{mc1}
    \end{align}
    for any unit vector $x\in\mathscr{H}$
\end{lemma}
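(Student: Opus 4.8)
The plan is to reduce this operator inequality to a purely scalar convexity statement by means of the spectral theorem, and then to close with Jensen's inequality. Since $A$ is positive, its spectrum $\spe\left(A\right)$ is contained in $\left[0,\left\|A\right\|\right]$, and the spectral theorem provides a (projection-valued) spectral measure $E$ on the Borel subsets of $\left[0,\left\|A\right\|\right]$ with $A=\int_{0}^{\left\|A\right\|}\lambda\,dE\left(\lambda\right)$ and, through the continuous functional calculus, $A^{p}=\int_{0}^{\left\|A\right\|}\lambda^{p}\,dE\left(\lambda\right)$ for every $p\ge1$ (including non-integer $p$). This is the device that converts ``$\langle\cdot\rangle^{p}$ versus $\langle\,\cdot^{p}\,\rangle$'' into a statement about moments of a measure.

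First I would fix a unit vector $x\in\mathscr{H}$ and introduce the set function $\mu_{x}\left(S\right):=\left\langle E\left(S\right)x,x\right\rangle$ for Borel $S\subseteq\left[0,\left\|A\right\|\right]$. Because $E$ is a spectral measure, $\mu_{x}$ is a genuine nonnegative countably additive measure, and the normalization $\mu_{x}\left(\left[0,\left\|A\right\|\right]\right)=\left\langle E\left(\left[0,\left\|A\right\|\right]\right)x,x\right\rangle=\left\langle x,x\right\rangle=\left\|x\right\|^{2}=1$ shows that $\mu_{x}$ is in fact a \emph{probability} measure; this is precisely where the hypothesis $\left\|x\right\|=1$ is used. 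With this notation the two sides of the claim become the first and the $p$-th moments of $\mu_{x}$, that is, $\left\langle Ax,x\right\rangle=\int\lambda\,d\mu_{x}\left(\lambda\right)$ and $\left\langle A^{p}x,x\right\rangle=\int\lambda^{p}\,d\mu_{x}\left(\lambda\right)$.

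The last step is then a direct appeal to Jensen's inequality. The function $\varphi\left(t\right)=t^{p}$ is convex on $\left[0,\infty\right)$ whenever $p\ge1$, so against the probability measure $\mu_{x}$ one has $\varphi\left(\int\lambda\,d\mu_{x}\right)\le\int\varphi\left(\lambda\right)\,d\mu_{x}$, which is exactly $\left\langle Ax,x\right\rangle^{p}\le\left\langle A^{p}x,x\right\rangle$. As a consistency check, the special case $p=2$ can be obtained with no machinery at all: by the Cauchy--Schwarz inequality and self-adjointness of $A$, $\left\langle Ax,x\right\rangle\le\left\|Ax\right\|\left\|x\right\|=\left\|Ax\right\|=\left\langle A^{2}x,x\right\rangle^{1/2}$, and squaring gives the result; this confirms the convexity mechanism in the simplest instance.

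I do not expect a genuine ``hard part'' here, since the analytic content is light. The only place requiring care is the measure-theoretic bookkeeping: verifying that $\mu_{x}$ really is a probability measure (so that Jensen applies) and that the moment representation $\left\langle A^{p}x,x\right\rangle=\int\lambda^{p}\,d\mu_{x}$ is legitimate for non-integer $p$, which is supplied by the Borel functional calculus. The convexity of $t\mapsto t^{p}$ for $p\ge1$ is elementary and is, in spirit, the same mechanism underlying the right-hand inequality in the Power-Mean inequality \eqref{PMI} of Lemma \ref{lemma1}.
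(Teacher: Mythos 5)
Your proof is correct. Note that the paper does not prove Lemma \ref{lemma2} at all: it is quoted as a known result (McCarthy's inequality) in the preliminary section, so there is no in-paper argument to compare against. Your argument --- passing to the spectral measure $E$ of $A$, observing that $\mu_{x}(\cdot)=\langle E(\cdot)x,x\rangle$ is a probability measure precisely because $\|x\|=1$, writing $\langle Ax,x\rangle$ and $\langle A^{p}x,x\rangle$ as the first and $p$-th moments of $\mu_{x}$ via the Borel functional calculus, and invoking Jensen's inequality for the convex function $t\mapsto t^{p}$, $p\ge 1$ --- is the standard proof of this inequality, and each step you flag as needing care (countable additivity of $\mu_{x}$, validity of the moment representation for non-integer $p$) is indeed supplied by the spectral theorem exactly as you say. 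The $p=2$ Cauchy--Schwarz consistency check is also correct, since $\|Ax\|^{2}=\langle A^{2}x,x\rangle$ for self-adjoint $A$.
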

The mixed Schwarz inequality was introduced in \cite{TK}, as
follows:
\begin{lemma}
    \label{lemma3}  Let  $A\in \mathscr{B}\left( \mathscr{H}\right)^+ $, then
    \begin{align}
    \left| {\left\langle {Ax,y} \right\rangle} \right|  ^2  \le \left\langle {\left| A \right|^{2\alpha } x,x} \right\rangle \left\langle {\left| {A^* } \right|^{2\left( {1 - \alpha } \right)} y,y} \right\rangle, \qquad 0\le \alpha \le 1. \label{eq2.4}
    \end{align}
    for any   vectors $x,y\in \mathscr{H}$
\end{lemma}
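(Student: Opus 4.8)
The plan is to derive \eqref{eq2.4} from the polar decomposition of $A$ together with the ordinary Cauchy--Schwarz inequality, which is the route originally taken by Kato. Write $A = U\left|A\right|$, where $U$ is the partial isometry with initial space $\overline{\operatorname{ran}}\,\left|A\right|$ and final space $\overline{\operatorname{ran}}\,A$, and recall the companion identity $\left|A^*\right| = U\left|A\right|U^*$. Since $\left|A\right|\ge 0$, the continuous functional calculus lets me split $\left|A\right| = \left|A\right|^{1-\alpha}\left|A\right|^{\alpha}$ for every $\alpha\in[0,1]$, and all the fractional powers appearing below are well defined and self-adjoint.

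First I would rewrite the inner product by moving factors across it. Using $A = U\left|A\right|^{1-\alpha}\left|A\right|^{\alpha}$ and the self-adjointness of $\left|A\right|^{1-\alpha}$ gives
\begin{align*}
\left\langle Ax, y\right\rangle = \left\langle U\left|A\right|^{1-\alpha}\left|A\right|^{\alpha}x, y\right\rangle = \left\langle \left|A\right|^{\alpha}x, \left|A\right|^{1-\alpha}U^* y\right\rangle .
\end{align*}
Applying the Cauchy--Schwarz inequality to the right-hand side then yields
\begin{align*}
\left|\left\langle Ax, y\right\rangle\right|^2 \le \left\|\left|A\right|^{\alpha}x\right\|^2\,\left\|\left|A\right|^{1-\alpha}U^* y\right\|^2 = \left\langle \left|A\right|^{2\alpha}x, x\right\rangle\,\left\langle U\left|A\right|^{2(1-\alpha)}U^* y, y\right\rangle .
\end{align*}

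It then remains only to recognise the second factor. Here I would invoke the functional-calculus transport rule $U f(\left|A\right|)U^* = f(U\left|A\right|U^*) = f(\left|A^*\right|)$ for continuous $f$ with $f(0)=0$, applied to $f(t)=t^{2(1-\alpha)}$; this gives $U\left|A\right|^{2(1-\alpha)}U^* = \left|A^*\right|^{2(1-\alpha)}$ and completes the proof. The main point requiring care --- and the only real obstacle --- is this last identity: the transport rule holds because $U^*U$ is the support projection of $\left|A\right|$, so $U$ acts isometrically on the relevant spectral subspaces, but it genuinely needs $f(0)=0$, which fails at the endpoint $\alpha=1$. That endpoint must be checked separately, where $2(1-\alpha)=0$ and the claimed bound reduces to $\left|\left\langle Ax,y\right\rangle\right|^2 \le \left\langle\left|A\right|^2 x,x\right\rangle\left\|y\right\|^2 = \left\|Ax\right\|^2\left\|y\right\|^2$, i.e. plain Cauchy--Schwarz; the endpoint $\alpha=0$ is symmetric. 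If instead the hypothesis is read literally as $A\ge 0$, then $U=1$ on the support and $\left|A\right|=\left|A^*\right|=A$, so the whole argument collapses to the single line $\left\langle Ax,y\right\rangle=\left\langle A^{\alpha}x,A^{1-\alpha}y\right\rangle$ followed by Cauchy--Schwarz.
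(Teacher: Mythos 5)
Your proof is correct, and there is nothing in the paper to compare it against line by line: the paper states Lemma \ref{lemma3} without proof, citing Kato's original article \cite{TK}, and later recovers it as the special case $B=1_{\mathscr{H}}$, $f(t)=t^{\alpha}$, $g(t)=t^{1-\alpha}$ of Lemma \ref{lemma5}. What you wrote is essentially Kato's own route: polar decomposition $A=U\left|A\right|$, the splitting $\left|A\right|=\left|A\right|^{1-\alpha}\left|A\right|^{\alpha}$, Cauchy--Schwarz, and the transport identity $U\left|A\right|^{2(1-\alpha)}U^{*}=\left|A^{*}\right|^{2(1-\alpha)}$. You correctly isolated the two genuinely delicate points. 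First, $Uf\left(\left|A\right|\right)U^{*}=f\left(\left|A^{*}\right|\right)$ does require $f(0)=0$, since $UU^{*}$ is only the projection onto $\overline{\mathrm{ran}}\,A$ and not the identity; your verification route (via $U^{*}U\left|A\right|=\left|A\right|$, monomials, then uniform approximation) is the standard one. Second, the endpoints $\alpha\in\{0,1\}$, where the exponent $2(1-\alpha)$ or $2\alpha$ vanishes, must be checked separately, and there the claimed bound collapses to plain Cauchy--Schwarz, e.g.\ $\left|\left\langle Ax,y\right\rangle\right|^{2}\le\left\|Ax\right\|^{2}\left\|y\right\\|^{2}$ at $\alpha=1$, exactly as you say. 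Your closing observation is also apt and worth recording: as printed, the hypothesis $A\in\mathscr{B}\left(\mathscr{H}\right)^{+}$ trivializes the lemma (then $\left|A\right|=\left|A^{*}\right|=A$ and the inequality is one application of Cauchy--Schwarz to $\left\langle A^{\alpha}x,A^{1-\alpha}y\right\rangle$); the superscript $+$ is evidently a slip carried over from Lemma \ref{lemma2}, since the paper applies the inequality to arbitrary operators, e.g.\ in Corollary \ref{cor1} via Lemma \ref{lemma5}.
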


In order to generalize \eqref{eq2.4}, Kittaneh in \cite{FK4}  used
the following key lemma to prove a generalization of Kato's
inequality \eqref{eq2.4}.
\begin{lemma}
    \label{lemma4}Let  $A,B\in \mathscr{B}\left( \mathscr{H}\right)^+$. Then $\left[ {\begin{array}{*{20}c} A & {C^* }  \\C & B  \\\end{array}} \right]$  is positive in $\mathscr{B}\left(\mathscr{H}\oplus \mathscr{H}\right)$ if and only if $
    \left| {\left\langle {Cx,y} \right\rangle } \right|^2  \le \left\langle {Ax,x} \right\rangle \left\langle {By,y} \right\rangle $ for every vectors $x,y\in \mathscr{H}$.
\end{lemma}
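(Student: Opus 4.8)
The plan is to characterize positivity of the block operator by evaluating its quadratic form on decomposed vectors and then to match the resulting scalar inequality with a Cauchy--Schwarz-type bound. First I would fix $\xi = x\oplus y \in \mathscr{H}\oplus\mathscr{H}$ and expand
\[
\left\langle \left[ {\begin{array}{*{20}c} A & {C^*} \\ C & B \\ \end{array}} \right] \left(x\oplus y\right),\, x\oplus y \right\rangle = \langle Ax,x\rangle + 2\operatorname{Re}\langle Cx,y\rangle + \langle By,y\rangle ,
\]
where I use $\langle C^*y,x\rangle = \overline{\langle Cx,y\rangle}$ to merge the two off-diagonal contributions into a single real part. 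Consequently the block matrix is positive in $\mathscr{B}\left(\mathscr{H}\oplus\mathscr{H}\right)$ if and only if this real expression is nonnegative for all $x,y\in\mathscr{H}$, which is the statement I will test in both directions.

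For necessity I would exploit the freedom to rescale and rotate the second coordinate. Assuming positivity, I replace $y$ by $\lambda y$ with $\lambda\in\mathbb{C}$ and choose $\arg\lambda$ so that $\overline{\lambda}\,\langle Cx,y\rangle = -t\,|\langle Cx,y\rangle|$, where $t=|\lambda|\ge 0$; the form above then becomes
\[
\langle By,y\rangle\, t^2 - 2\,|\langle Cx,y\rangle|\, t + \langle Ax,x\rangle \ge 0 \qquad \left(t\ge 0\right).
\]
When $\langle By,y\rangle>0$ the vertex of this quadratic lies at $t=|\langle Cx,y\rangle|/\langle By,y\rangle\ge 0$, so its minimum over $t\ge 0$ is its global minimum, and nonnegativity there is exactly the discriminant condition $|\langle Cx,y\rangle|^2 \le \langle Ax,x\rangle\,\langle By,y\rangle$.

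For the converse I would argue directly from the scalar inequality. Because $A,B\ge 0$, both $\langle Ax,x\rangle$ and $\langle By,y\rangle$ are nonnegative, so I may take square roots and estimate
\[
2\operatorname{Re}\langle Cx,y\rangle \ge -2\,|\langle Cx,y\rangle| \ge -2\sqrt{\langle Ax,x\rangle\,\langle By,y\rangle},
\]
whence $\langle Ax,x\rangle + 2\operatorname{Re}\langle Cx,y\rangle + \langle By,y\rangle \ge \big(\sqrt{\langle Ax,x\rangle}-\sqrt{\langle By,y\rangle}\big)^2 \ge 0$, and positivity follows from the first paragraph. The one point I expect to require genuine care, rather than the main quadratic estimate, is the degenerate case $\langle By,y\rangle=0$ in the necessity argument: there the quadratic collapses to the affine function $-2\,|\langle Cx,y\rangle|\,t + \langle Ax,x\rangle$, whose nonnegativity for all $t\ge 0$ forces $\langle Cx,y\rangle=0$, so the claimed inequality still holds with both sides equal to zero. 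Handling this boundary bookkeeping cleanly is the only real subtlety in an otherwise routine argument.
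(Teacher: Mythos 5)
Your proof is correct, but there is nothing in the paper to compare it against: the paper states this lemma without proof, quoting it from Kittaneh \cite{FK4} as a known characterization of positive $2\times 2$ operator matrices. Your argument is the standard self-contained one, and it is sound. The expansion $\left\langle T(x\oplus y),x\oplus y\right\rangle=\langle Ax,x\rangle+2\operatorname{Re}\langle Cx,y\rangle+\langle By,y\rangle$ is right, and since every vector of $\mathscr{H}\oplus\mathscr{H}$ has this form, positivity is indeed equivalent to nonnegativity of that expression for all $x,y$. In the necessity direction, the substitution $y\mapsto\lambda y$ with the phase of $\lambda$ chosen so that $\overline{\lambda}\langle Cx,y\rangle=-t\left|\langle Cx,y\rangle\right|$ correctly reduces the problem to the quadratic $\langle By,y\rangle t^{2}-2\left|\langle Cx,y\rangle\right|t+\langle Ax,x\rangle\ge 0$ on $t\ge 0$, and your observation that the vertex lies in $t\ge 0$ makes the discriminant condition exact rather than merely sufficient; your handling of the degenerate case $\langle By,y\rangle=0$ (letting $t\to\infty$ in the affine function to force $\langle Cx,y\rangle=0$) closes the only place where a gap could hide. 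The converse via $\left(\sqrt{\langle Ax,x\rangle}-\sqrt{\langle By,y\rangle}\right)^{2}\ge 0$ is also fine. One point you use silently and could state explicitly: on a \emph{complex} Hilbert space, $\langle T\xi,\xi\rangle\ge 0$ for all $\xi$ already forces $T$ to be self-adjoint, so identifying positivity with nonnegativity of the quadratic form needs no separate symmetry check; on a real space this step would fail. Since the paper in question leans on this lemma repeatedly (Theorem \ref{thm4} and its corollaries), your proof usefully supplies what the paper only cites.
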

Indeed, in \cite{FK4} we find that
\begin{lemma}
    \label{lemma5}  Let  $A,B\in \mathscr{B}\left( \mathscr{H}\right)$ such that $|A|B=B^*|A|$.
    If $f$ and $g$ are nonnegative continuous functions on $\left[0,\infty\right)$ satisfying $f(t)g(t) =t$ $(t\ge0)$, then
    \begin{align}
    \left| {\left\langle {ABx,y} \right\rangle } \right| \le r\left(B\right)\left\| {f\left( {\left| A \right|} \right)x} \right\|\left\| {g\left( {\left| {A^* } \right|} \right)y} \right\|\label{kittaneh.ineq}
    \end{align}
    for any   vectors $x,y\in  \mathscr{H} $.
\end{lemma}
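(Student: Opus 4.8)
The plan is to reduce the inequality to the positivity of a single block operator and then to exploit the hypothesis $|A|B=B^*|A|$ in two different ways: once to rewrite the product $AB$, and once (through its iterates) to replace the operator norm of $B$ by its spectral radius.

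First I would fix the polar decomposition $A=U|A|$, with $U$ the associated partial isometry, and record the functional-calculus identities $|A^*|=U|A|U^*$ and hence $g(|A^*|)=U\,g(|A|)\,U^*$ for every continuous $g$, so that $\|g(|A|)U^*y\|=\|g(|A^*|)y\|$. Writing the hypothesis as $|A|B=B^*|A|$, I rewrite
\[
\langle ABx,y\rangle=\langle U|A|Bx,y\rangle=\langle B^*|A|x,U^*y\rangle=\langle |A|x,BU^*y\rangle .
\]
Factoring $|A|=g(|A|)f(|A|)$ by the functional calculus (the two factors are self-adjoint since $f,g\ge 0$ and commute as functions of $|A|$), one application of Cauchy--Schwarz yields
\[
|\langle ABx,y\rangle|\le \|f(|A|)x\|\,\|g(|A|)BU^*y\| .
\]
Everything then reduces to the estimate $\|g(|A|)BU^*y\|\le r(B)\,\|g(|A^*|)y\|$, i.e.\ to $\|g(|A|)\,B\,g(|A|)^{-1}\|\le r(B)$ after the substitution $z=g(|A|)U^*y$.

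The same reduction can be packaged through Lemma \ref{lemma4}, which is surely the intended route. Squaring the target inequality, it suffices to establish the positivity
\[
\begin{bmatrix} r(B)^2 f(|A|)^2 & (AB)^* \\ AB & g(|A^*|)^2 \end{bmatrix}\ge 0 \quad\text{in } \mathscr{B}(\mathscr{H}\oplus\mathscr{H}),
\]
and by the standard Douglas-type factorization of a positive $2\times 2$ block operator this is equivalent to producing a contraction $K$ with $AB=r(B)\,g(|A^*|)\,K\,f(|A|)$. I would construct $K$ from $U$ together with the hypothesis.

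The heart of the matter, and the step I expect to be the main obstacle, is the appearance of the \emph{spectral} radius $r(B)$ rather than the operator norm $\|B\|$. The structural fact that drives this is that $|A|B=B^*|A|$ says exactly that $|A|B$ is self-adjoint, so on $\overline{\operatorname{ran}|A|}$ the operator $|A|^{1/2}B|A|^{-1/2}$ is self-adjoint; being similar to $B$, it has norm exactly $r(B)$, which is precisely what forces the spectral radius into the symmetric splitting. For a general continuous $g$ the similarity $g(|A|)Bg(|A|)^{-1}$ is no longer self-adjoint, so a bare Cauchy--Schwarz can only deliver a norm; here I would use that the hypothesis \emph{propagates to powers} --- an easy induction from $|A|B^{n}=(|A|B)B^{n-1}=B^{*}|A|B^{n-1}$ gives $|A|B^{n}=(B^{n})^{*}|A|$, so each pair $(A,B^{n})$ again satisfies the hypothesis --- and then invoke the spectral-radius formula $r(B)=\lim_{n\to\infty}\|B^{n}\|^{1/n}$ to pass from a norm bound to the sharp $r(B)$ bound by taking $n$-th roots. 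The remaining technical points I would have to manage carefully are the kernel/partial-isometry bookkeeping when $|A|$ is not invertible (by passing to $|A|+\varepsilon$ and letting $\varepsilon\downarrow 0$, or by restricting to $\overline{\operatorname{ran}|A|}$) and the verification that the similarity genuinely remains controlled by $r(B)$ for the non-self-adjoint factorizations, which is the delicate place where the argument must be pinned down.
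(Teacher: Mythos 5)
There is nothing in the paper to compare your argument against: Lemma \ref{lemma5} is not proved in the paper at all, it is imported verbatim from Kittaneh \cite{FK4}. Judged on its own merits, your proposal has a genuine gap, and it sits exactly at the step you yourself flag as ``the main obstacle''; the kernel/partial-isometry bookkeeping you worry about is harmless by comparison. After your Cauchy--Schwarz split, the whole proof rests on the claim $\|g(|A|)BU^*y\|\le r(B)\,\|g(|A^*|)y\|$, i.e.\ on $\|g(|A|)\,B\,g(|A|)^{-1}\|\le r(B)$. For $g\equiv 1$ this reads $\|B\|\le r(B)$, and for $g(t)=t$ it reads $\|B^*\|\le r(B)$ (the hypothesis gives $|A|B|A|^{-1}=B^*$ when $|A|$ is invertible); both fail unless $B$ is normaloid, i.e.\ $\|B\|=r(B)$. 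Your proposed repair --- propagate the hypothesis to powers and invoke $r(B)=\lim_{n\to\infty}\|B^n\|^{1/n}$ --- does not mesh with this reduction: applying a norm version of the inequality to the pair $(A,B^n)$ bounds $|\langle AB^nx,y\rangle|$, a quantity that changes with $n$, and taking $n$-th roots never returns you to $|\langle ABx,y\rangle|$. The iteration that genuinely manufactures a spectral radius here is Reid's inequality in Halmos's sharpened form: since $B^*|A|B=|A|B^2$, the semi-inner product $[u,v]=\langle |A|u,v\rangle$ gives $|\langle |A|Bx,z\rangle|^2\le\langle |A|B^2x,x\rangle\,\langle |A|z,z\rangle$, and iterating this and then using the spectral radius formula yields $|\langle |A|Bx,z\rangle|\le r(B)\langle |A|x,x\rangle^{1/2}\langle |A|z,z\rangle^{1/2}$. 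Note what this mechanism produces: the symmetric weights $f(t)=g(t)=t^{1/2}$, and nothing else. Your (correct) observation that $|A|^{1/2}B|A|^{-1/2}$ is self-adjoint of norm exactly $r(B)$ is precisely this symmetric case.

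Moreover, no amount of care can close the gap, because the statement in the stated generality is false; the asymmetric splittings your reduction needs are exactly where it breaks. Take $\mathscr{H}=\mathbb{C}^2$ and
\begin{align*}
A = |A| = \left[ {\begin{array}{*{20}c}   2 & 0  \\   0 & 1  \\ \end{array}} \right],\qquad B = \left[ {\begin{array}{*{20}c}   0 & {1/2}  \\   1 & 0  \\ \end{array}} \right],\qquad f(t)=t,\quad g\equiv 1 .
\end{align*}
Then $|A|B = B^*|A| = \left[ {\begin{array}{*{20}c}   0 & 1  \\   1 & 0  \\ \end{array}} \right]$, so the hypothesis holds, while $B^2=\frac{1}{2}1_{\mathscr{H}}$ gives $r(B)=1/\sqrt{2}$. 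With $x=(0,1)^T$ and $y=(1,0)^T$,
\begin{align*}
\left| {\left\langle {ABx,y} \right\rangle } \right| = 1 > \frac{1}{\sqrt 2} = r(B)\left\| {f\left( {\left| A \right|} \right)x} \right\|\left\| {g\left( {\left| {A^* } \right|} \right)y} \right\|,
\end{align*}
so the inequality fails; the same example kills your intermediate claim, since $\|By\|=1>r(B)\|y\|$. Replacing $A$ by $\left[ {\begin{array}{*{20}c}   0 & 1  \\   2 & 0  \\ \end{array}} \right]$, which has the same $|A|$ and satisfies the same hypothesis but now has $AB=1_{\mathscr{H}}$, defeats even the case $y=x$. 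What survives --- and what your self-adjointness remark actually proves --- is the symmetric inequality $|\langle ABx,y\rangle|\le r(B)\||A|^{1/2}x\|\,\||A^*|^{1/2}y\|$, i.e.\ $f=g=t^{1/2}$ (the case $\alpha=1/2$ of \eqref{eq2.4}). A correct treatment must either restrict to that case or add hypotheses on $A$ and $B$ strong enough to make $g(|A|)Bg(|A|)^{-1}$ normaloid; as stated, the lemma cannot be proved, by you or anyone else.
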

Clearly, by setting  $B=1_{\mathscr{H}}$ and choosing
$f(t)=t^{\alpha}$, $g(t)=t^{1-\alpha}$, then the inequality
\eqref{kittaneh.ineq} reduces to \eqref{eq2.4}.

The following useful estimate  of a spectral radius was also
obtained by Kittaneh in \cite{F}.

\begin{lemma}
    \label{lemma6}If $A,B\in \mathscr{B}\left( \mathscr{H}\right)$. Then
\begin{multline}
r\left( {AB} \right)\label{fact3}\\\le \frac{1}{4}\left( {\left\|
{AB} \right\| + \left\| {BA} \right\| + \sqrt {\left(\left\| {AB}
\right\|- \left\| {BA} \right\|\right)^2 + 4m\left(A,B\right)}}
\right),
\end{multline}
where $m\left(A,B\right):=\min \left\{ {\left\| A \right\|\left\|
{BAB} \right\|,\left\| B \right\|\left\| {ABA} \right\|}
\right\}$.
\end{lemma}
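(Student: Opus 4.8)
The plan is to pass to the off-diagonal block operator
\[
T=\begin{bmatrix}0&A\\ B&0\end{bmatrix}\in\mathscr B(\mathscr H\oplus\mathscr H).
\]
A direct computation gives $T^2=\begin{bmatrix}AB&0\\0&BA\end{bmatrix}$, so that, using $r(AB)=r(BA)$ (equality of the nonzero spectra of $AB$ and $BA$), one has $r(T^2)=\max\{r(AB),r(BA)\}=r(AB)$ and hence $r(AB)=r(T)^2$. This reduces the claim to an estimate for $r(T)$. Along the way I record $\|T^2\|=\max\{\|AB\|,\|BA\|\}$, $\|T^3\|=\max\{\|ABA\|,\|BAB\|\}$ and $\|T\|=\max\{\|A\|,\|B\|\}$; the six quantities $\|A\|,\|B\|,\|AB\|,\|BA\|,\|ABA\|,\|BAB\|$ appearing in the statement are thus exactly the norms of the blocks of $T,T^2,T^3$.

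It is convenient to reformulate the target. Writing $p=\|AB\|$, $q=\|BA\|$ and $s=m(A,B)$, a check of the associated quadratic shows that the right-hand side of the asserted inequality is precisely the Perron (larger) eigenvalue of the $2\times2$ entrywise-nonnegative matrix
\[
M=\tfrac12\begin{bmatrix}p&\sqrt{s}\\ \sqrt{s}&q\end{bmatrix},\qquad r(M)=\tfrac14\Big(p+q+\sqrt{(p-q)^2+4s}\Big).
\]
So it suffices to prove $r(AB)\le r(M)$. I would obtain this from Gelfand's formula $r(AB)=\lim_m\|(AB)^m\|^{1/m}$ by controlling the growth of the two ``channels'' $\|(AB)^m\|$ and $\|(BA)^m\|$: keeping adjacent pairs $AB$ and $BA$ intact produces the diagonal rates $p,q$, while the coupling between the two channels is governed by the three-letter blocks $ABA,BAB$ and yields the off-diagonal geometric-mean term; the minimum in $s$, and the symmetric roles of the two channels, come from $r(AB)=r(BA)$ together with the freedom to begin the alternation with $A$ or with $B$. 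Equivalently, one applies a spectral-radius inequality for $2\times2$ operator matrices to $T$, with the block entries replaced by the composite norms above rather than by single-factor norms.

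The main obstacle is to produce the sharp coupling term $\sqrt{\,m(A,B)\,}=\sqrt{\min\{\|A\|\|BAB\|,\|B\|\|ABA\|\}}$. The naive bound, in which each block is broken into single factors, replaces $\|ABA\|$ by $\|A\|^2\|B\|$ and collapses the whole estimate to the crude $r(AB)\le\|A\|\|B\|$; sharpness is retained only by keeping the two- and three-letter composites together and passing to norms afterwards, and by absorbing the resulting multiplicative constant so that it disappears after extracting $m$-th roots and letting $m\to\infty$. A clean instance of how $s$ enters is the one-sided estimate obtained from the grouping $(AB)^n=[(AB)^{n-2}(ABA)]\,B$, which gives $\|(AB)^n\|\le\|(AB)^{n-2}\|\,\|ABA\|\,\|B\|$, hence $\|(AB)^{2k}\|\le(\|ABA\|\,\|B\|)^{k}$ and, by Gelfand, $r(AB)\le\sqrt{\|B\|\|ABA\|}$; the symmetric grouping of $(BA)^n$ together with $r(AB)=r(BA)$ gives $r(AB)\le\sqrt{\|A\|\|BAB\|}$, and the full statement is the refinement that blends these coupling estimates with the diagonal data $p,q$. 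I would finally note the routine point that in infinite dimensions $r(AB)$ need not be attained as an eigenvalue, so the limiting (Gelfand) argument, rather than a bare eigenvector computation, is what makes the passage to $r(M)$ rigorous.
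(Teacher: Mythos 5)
The paper itself never proves Lemma \ref{lemma6}: it is quoted verbatim from Kittaneh \cite{F}, so your attempt can only be measured against Kittaneh's original argument and on its own terms. Measured that way, it has a genuine gap. Your setup is fine — $r(AB)=r(BA)$, the block computations for $T=\left[\begin{smallmatrix}0&A\\ B&0\end{smallmatrix}\right]$, the identification of the right-hand side of \eqref{fact3} with the Perron root $r(M)$ of $M=\frac{1}{2}\left[\begin{smallmatrix}p&\sqrt{s}\\ \sqrt{s}&q\end{smallmatrix}\right]$, and the Gelfand estimates $r(AB)\le\sqrt{\|B\|\|ABA\|}$, $r(AB)\le\sqrt{\|A\|\|BAB\|}$ — but the central inequality $r(AB)\le r(M)$ is never derived. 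The ``two channels'' passage describes what you hope a recursion will do; no inequality chain backs the phrase ``the coupling \dots yields the off-diagonal geometric-mean term.'' There is a concrete obstruction to the plan as stated: $M$ mixes word lengths (two-letter norms on the diagonal, a one-letter-times-three-letter quantity off the diagonal), so no power $T^{k}$ has $M$ as its matrix of block norms, and hence no application of the lemma $r\left(\left[T_{ij}\right]\right)\le r\left(\left[\|T_{ij}\|\right]\right)$ to a power of $T$ can produce it. What actually produces it, in Kittaneh's proof, is an asymmetric factorization combined with $r(XY)=r(YX)$: taking $X=\left[\begin{smallmatrix}A\\ BA\end{smallmatrix}\right]$ and $Y=\left[\begin{smallmatrix}B&1_{\mathscr{H}}\end{smallmatrix}\right]$ gives $YX=2BA$ and $XY=\left[\begin{smallmatrix}AB&A\\ BAB&BA\end{smallmatrix}\right]$, whence $2r(AB)=r(XY)\le r\left(\left[\begin{smallmatrix}\|AB\|&\|A\|\\ \|BAB\|&\|BA\|\end{smallmatrix}\right]\right)$; the symmetric factorization gives the analogous bound with $\|B\|\|ABA\|$, and the minimum $m(A,B)$ arises by taking the better of these two \emph{separate} estimates, not from one blended recursion. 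None of this mechanism appears in your proposal.

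Ironically, the estimates you did prove already suffice, by a route you explicitly disclaim. You have $r(AB)\le\|AB\|=p$, $r(AB)=r(BA)\le\|BA\|=q$, and $r(AB)\le\sqrt{m(A,B)}=\sqrt{s}$ (for the last, Gelfand is not even needed: $r^{2}(AB)=r\left((AB)^{2}\right)\le\|ABAB\|\le\|ABA\|\|B\|$, and similarly for $BA$). These three bounds imply \eqref{fact3} because of the purely scalar inequality
\begin{align*}
\min\left\{p,q,\sqrt{s}\right\}\le\frac{1}{4}\left(p+q+\sqrt{\left(p-q\right)^{2}+4s}\right),\qquad p,q,s\ge0,
\end{align*}
which is checked in two cases: if $\sqrt{s}\le\min\{p,q\}$, the right-hand side is at least $\frac{1}{4}\left(2\sqrt{s}+2\sqrt{s}\right)=\sqrt{s}$; if $\sqrt{s}>m_{0}:=\min\{p,q\}$ and $M_{0}:=\max\{p,q\}$, the right-hand side is at least $\frac{1}{4}\left(m_{0}+M_{0}+\sqrt{\left(M_{0}-m_{0}\right)^{2}+4m_{0}^{2}}\right)\ge m_{0}$, the last step reducing, after squaring, to $4m_{0}\left(M_{0}-m_{0}\right)\ge0$. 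Your closing claim — that the full statement is a refinement which ``blends'' the crude estimates and cannot be reached by them — is exactly backwards, and that misreading is what left your main step unproven. Add the scalar lemma above and your argument closes (indeed more elementarily than Kittaneh's); without it, or without the factorization trick, the proof is incomplete.
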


In some of our results we need the following two fundamental norm
estimates, which  are:
 \begin{multline}
\label{fact1}\left\| {A+ B } \right\|\\\le \frac{1}{2}\left(
{\left\| A \right\| + \left\| B \right\| + \sqrt
    {\left( {\left\| A \right\| - \left\| B \right\|} \right)^2  +
        4\left\| {A^{1/2} B^{1/2} } \right\|^2 } } \right),
\end{multline}
and
\begin{align}
\label{fact2}\left\| {A^{1/2} B^{1/2} } \right\|  \le\left\| {A  B
} \right\| ^{1/2}.
\end{align}
 Both estimates are valid for all positive operators $A,B \in \mathscr{B}\left( \mathscr{H}\right)$. Also, it should be noted that \eqref{fact1} is sharper than the triangle inequality as pointed out by Kittaneh in \cite{FK3}. \\

A new refinement of Cauchy-Schwarz inequality was recently
obtained in   \cite{AM}, as follows:
\begin{lemma}
\label{lemma7}  Let $A\in \mathscr{B}\left( \mathscr{H}\right)^+$,
then
    \begin{align}
    \label{fact4}\left|\left\langle {Ax,y} \right\rangle\right|
    ^{2p}&\le   \left[\left\langle { A^p x,x} \right\rangle-
    \left\langle{\left| {A - \left\langle { A x,x} \right\rangle
            1_{\mathcal{H}} } \right|^px,x} \right\rangle
    \right]\\&\qquad\times \left[\left\langle { A^p y,y}
    \right\rangle- \left\langle{\left| {A - \left\langle
            { A y,y} \right\rangle 1_{\mathcal{H}} } \right|^py,y} \right\rangle \right]\nonumber \\
    &\le\left\langle { A^p x,x} \right\rangle\left\langle { A^p y,y}
    \right\rangle \nonumber
    \end{align}
    for all $p\ge2$ and every $x,y\in \mathcal{H}$.
\end{lemma}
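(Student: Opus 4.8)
The plan is to combine the classical Cauchy--Schwarz inequality for a positive operator with a sharpened scalar estimate fed through the functional calculus. Throughout I take $x,y$ to be unit vectors and write $\mu_x=\left\langle Ax,x\right\rangle$, $\mu_y=\left\langle Ay,y\right\rangle$. The outer (second) inequality is the easy one: since $\left|A-\mu_x 1_{\mathcal{H}}\right|^p$ is a positive operator, $\left\langle\left|A-\mu_x 1_{\mathcal{H}}\right|^p x,x\right\rangle\ge0$, so each bracketed factor is dominated by $\left\langle A^p x,x\right\rangle$; multiplying the two nonnegative factors gives the bound $\left\langle A^p x,x\right\rangle\left\langle A^p y,y\right\rangle$.

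For the decisive first inequality I would first record the elementary consequence of Cauchy--Schwarz for the positive operator $A$, namely $\left|\left\langle Ax,y\right\rangle\right|^2\le\left\langle Ax,x\right\rangle\left\langle Ay,y\right\rangle$, whence $\left|\left\langle Ax,y\right\rangle\right|^{2p}\le\mu_x^{\,p}\,\mu_y^{\,p}$. It then suffices to prove the single-vector refinement
\[
\left\langle Ax,x\right\rangle^{p}\le\left\langle A^p x,x\right\rangle-\left\langle\left|A-\left\langle Ax,x\right\rangle 1_{\mathcal{H}}\right|^p x,x\right\rangle
\]
together with its analogue for $y$. Multiplying these two lower bounds (each nonnegative, since it exceeds $\mu_x^{\,p}\ge0$, resp. $\mu_y^{\,p}\ge0$) against the Cauchy--Schwarz estimate closes the argument.

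To establish the single-vector refinement I would pass through a pointwise scalar inequality and then invoke the functional calculus. The claim is that for all $\lambda\ge0$, $\mu\ge0$ and $p\ge2$,
\[
\lambda^{p}-\left|\lambda-\mu\right|^{p}\ge\mu^{p}+p\,\mu^{p-1}\left(\lambda-\mu\right).
\]
Granting this, substituting the self-adjoint operator $A$ (with the fixed scalar $\mu:=\mu_x$) yields the operator inequality $A^p-\left|A-\mu 1_{\mathcal{H}}\right|^p\ge\mu^p 1_{\mathcal{H}}+p\,\mu^{p-1}(A-\mu 1_{\mathcal{H}})$; applying $\left\langle\,\cdot\,x,x\right\rangle$ and using $\left\langle Ax,x\right\rangle=\mu$ makes the linear term vanish, leaving precisely the desired refinement.

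The main obstacle is the pointwise inequality. Setting $t=\lambda-\mu\ge-\mu$ and $\psi(t)=(\mu+t)^p-\left|t\right|^p-\mu^p-p\mu^{p-1}t$, one finds $\psi(0)=\psi'(0)=0$ and $\psi''(t)=p(p-1)\bigl[(\mu+t)^{p-2}-\left|t\right|^{p-2}\bigr]$. The delicate point is that $\psi$ is \emph{not} globally convex: since $(\mu+t)^{p-2}\ge\left|t\right|^{p-2}$ holds exactly when $\mu+t\ge\left|t\right|$, i.e. for $t\ge-\mu/2$, the function $\psi$ is convex on $[-\mu/2,\infty)$ but concave on $[-\mu,-\mu/2]$. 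On the convex part the stationary point at $t=0$ is a minimum, giving $\psi\ge0$ there; on the concave part I would evaluate the endpoints, $\psi(-\mu)=(p-2)\mu^p\ge0$ and $\psi(-\mu/2)=\tfrac{p-2}{2}\mu^p\ge0$, and invoke concavity (a concave function lies above the chord joining its endpoint values, which here is nonnegative) to conclude $\psi\ge0$ on $[-\mu,-\mu/2]$ as well. This region-splitting is the only genuinely subtle step; everything else reduces to Cauchy--Schwarz, positivity, and the functional calculus.
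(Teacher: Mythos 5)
Your proof is correct. A preliminary remark: the paper never proves Lemma \ref{lemma7} at all --- it is imported verbatim from the author's preprint \cite{AM} --- so there is no in-paper argument to compare against; your derivation stands as a complete, self-contained substitute. The logical skeleton is sound: Cauchy--Schwarz for the semi-inner product $(u,v)\mapsto\left\langle Au,v\right\rangle$ gives $\left|\left\langle Ax,y\right\rangle\right|^{2p}\le\mu_x^{\,p}\mu_y^{\,p}$, and the single-vector refinement $\mu_x^{\,p}\le\left\langle A^px,x\right\rangle-\left\langle\left|A-\mu_x1_{\mathcal{H}}\right|^px,x\right\rangle$ then yields both displayed inequalities, with the nonnegativity of all factors correctly invoked before multiplying. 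Your scalar inequality $\lambda^p-\left|\lambda-\mu\right|^p\ge\mu^p+p\mu^{p-1}(\lambda-\mu)$ is precisely the statement that $t\mapsto t^p$ is \emph{superquadratic} for $p\ge2$ (in the sense of Abramovich, Jameson and Sinnamon), and your region-splitting proof of it checks out: $\psi''(t)=p(p-1)\bigl[(\mu+t)^{p-2}-\left|t\right|^{p-2}\bigr]$ changes sign exactly at $t=-\mu/2$; on $[-\mu/2,\infty)$ convexity plus the stationary value $\psi(0)=0$ gives $\psi\ge0$; on $[-\mu,-\mu/2]$ concavity plus the endpoint values $(p-2)\mu^p\ge0$ and $\tfrac{p-2}{2}\mu^p\ge0$ gives $\psi\ge0$ by the chord argument (a concave function dominates its chord); the degenerate cases $\mu=0$ and $p=2$ (where $\psi\equiv0$) are harmless. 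Pushing the scalar inequality through the functional calculus of the positive operator $A$ is legitimate because $\left|A-\mu1_{\mathcal{H}}\right|^p=g(A)$ with $g(\lambda)=\left|\lambda-\mu\right|^p$ ($A-\mu1_{\mathcal{H}}$ being self-adjoint), and evaluating at a unit vector with $\mu=\left\langle Ax,x\right\rangle$ annihilates the linear term, exactly as you say. One point worth recording: the lemma as printed says ``every $x,y\in\mathcal{H}$,'' but by homogeneity (the left side scales with the $4p$-th power of a common dilation of $x,y$, the right side only with the $4$-th) the inequality can only hold for unit vectors; your normalization $\left\|x\right\|=\left\|y\right\|=1$ is the correct reading and matches the source \cite{AM}.
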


\section{Numerical Radius Inequalities}\label{sec3}
Let us begin with the following result.
\begin{theorem}
    \label{thm1}    Let  $A,B\in \mathscr{B}\left( \mathscr{H}\right)$ such that $|A|B= B^*|A|$. If $f, g$ be
    nonnegative continuous functions on $\left[0,\infty\right)$
    satisfying $f(t)g(t) = t$, $(t \ge 0)$. Then
    \begin{align}
    w \left( AB \right) &\le \frac{1}{2} r \left(B \right)
    w\left( {\left(f^2\left( {\left| {A  } \right|} \right) +g^2\left( {\left| {A^*} \right|} \right)\right)  } \right) \nonumber
    \\
    &\le\frac{1}{8} \left( {\left\| B \right\| + \left\| {B^2 } \right\|^{1/2} } \right)\left\{ \left\| {f^2 \left( {\left| A \right|} \right)} \right\| + \left\| {g^2 \left( {\left| {A^* } \right|} \right)} \right\|  \right.\label{eq2.1}\\
    &\qquad\left.+  \sqrt {\left( {\left\| {f^2 \left( {\left| A \right|} \right)} \right\| - \left\| {g^2 \left( {\left| {A^* } \right|} \right)} \right\|} \right)^2  + 4\left\| {f\left( {\left| A \right|} \right)g\left( {\left| {A^* } \right|} \right)} \right\|^2 } \right\}.\nonumber
    \end{align}
    In particular, we have
    \begin{align*}
   w \left( A \right) &\le \frac{1}{2}
   w\left( {\left(f^2\left( {\left| {A  } \right|} \right) +g^2\left( {\left| {A^*} \right|} \right)\right)  } \right).
    \end{align*}
\end{theorem}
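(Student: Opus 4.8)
The plan is to specialize the mixed-Schwarz-type estimate of Lemma~\ref{lemma5} to the diagonal $y=x$ and then collapse the resulting product of two norms into the numerical radius of a single positive operator by means of the arithmetic--geometric mean inequality.

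First I would set $y=x$ with $\|x\|=1$ in \eqref{kittaneh.ineq}, which gives
\[
\left|\left\langle ABx,x\right\rangle\right|\le r(B)\,\left\|f(|A|)x\right\|\,\left\|g(|A^*|)x\right\|.
\]
Because $f,g$ are nonnegative and $|A|,|A^*|$ are positive, the operators $f(|A|)$ and $g(|A^*|)$ are positive (hence self-adjoint), so $\|f(|A|)x\|^2=\langle f^2(|A|)x,x\rangle$ and $\|g(|A^*|)x\|^2=\langle g^2(|A^*|)x,x\rangle$. Applying the arithmetic--geometric mean inequality (the case $\alpha=\tfrac12$ of \eqref{PMI}, i.e.\ $\sqrt{st}\le\tfrac12(s+t)$) to these two nonnegative scalars, I obtain
\[
\left|\left\langle ABx,x\right\rangle\right|\le\frac{1}{2}r(B)\left\langle\left(f^2(|A|)+g^2(|A^*|)\right)x,x\right\rangle.
\]
Taking the supremum over unit vectors $x$ and noting that $P:=f^2(|A|)+g^2(|A^*|)$ is positive, so that $\sup_{\|x\|=1}\langle Px,x\rangle=w(P)=\|P\|$, produces the first inequality of \eqref{eq2.1}.

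For the second inequality I would combine three factors of $\tfrac12$. Since $P$ is positive, $w(P)=\|P\|$, and I split this norm using the estimate \eqref{fact1} applied to the positive operators $f^2(|A|)$ and $g^2(|A^*|)$, whose square roots are precisely $f(|A|)$ and $g(|A^*|)$; this yields the bracketed expression in \eqref{eq2.1} together with a factor $\tfrac12$. Independently, I bound the spectral radius by $r(B)\le w(B)\le\tfrac12\left(\|B\|+\|B^2\|^{1/2}\right)$, where the last step is Kittaneh's inequality \eqref{eq1.2}. Multiplying the leading $\tfrac12$ from the first inequality, the $\tfrac12$ coming from \eqref{eq1.2}, and the $\tfrac12$ coming from \eqref{fact1} accounts exactly for the coefficient $\tfrac18$ appearing in \eqref{eq2.1}.

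Finally, the particular case is immediate: taking $B=1_{\mathscr{H}}$ makes the hypothesis $|A|B=B^*|A|$ trivially true, gives $AB=A$ and $r(B)=1$, so the first inequality collapses to $w(A)\le\tfrac12\,w\left(f^2(|A|)+g^2(|A^*|)\right)$. The only step deserving care is the identity $\sup_{\|x\|=1}\langle Px,x\rangle=w(P)=\|P\|$, which rests on the positivity of $P$; every other step is a direct invocation of a lemma or an elementary scalar inequality, so I anticipate no genuine obstacle.
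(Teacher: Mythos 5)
Your proof is correct and follows essentially the same route as the paper: specialize Lemma~\ref{lemma5} to $y=x$, apply the arithmetic--geometric mean case of \eqref{PMI}, take the supremum to get the first inequality, then split $\left\| f^2\left(\left|A\right|\right)+g^2\left(\left|A^*\right|\right) \right\|$ via \eqref{fact1}. The only (immaterial) difference is how you justify $r\left(B\right)\le \tfrac{1}{2}\left(\left\|B\right\|+\left\|B^2\right\|^{1/2}\right)$: you use $r\left(B\right)\le w\left(B\right)$ together with Kittaneh's inequality \eqref{eq1.2}, whereas the paper invokes Lemma~\ref{lemma6} (inequality \eqref{fact3}) with $A=1_{\mathscr{H}}$ --- both give the same bound, so the arguments coincide in substance.
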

\begin{proof}
    Setting $y=x$ in  \eqref{kittaneh.ineq}, we get
    \begin{align*}
    \left| {\left\langle {ABx,x} \right\rangle } \right|  &\le r \left(B \right)\left\| {f\left( {\left| A  \right|} \right)x} \right\| \left\| {g\left( {\left| {A^* } \right|} \right)x} \right\|  \\
    &=  r \left(B \right)\left\langle {f^2\left( {\left| {A  } \right|} \right)x  ,x  } \right\rangle^{1/2}  \left\langle {g^2\left( {\left| {A^*} \right|} \right)x  ,x  } \right\rangle^{1/2}    \\
    &\le \frac{1}{2} r \left(B \right) \left(\left\langle {f^2\left( {\left| {A  } \right|} \right)x  ,x  } \right\rangle +  \left\langle {g^2\left( {\left| {A^*} \right|} \right)x  ,x  } \right\rangle  \right)\qquad \text{by\,\,\eqref{PMI}}\\
    &=   \frac{1}{2} r \left(B \right) \left\langle {\left(f^2\left( {\left| {A  } \right|} \right) +g^2\left( {\left| {A^*} \right|} \right)\right)x  ,x  } \right\rangle
    \end{align*}
    Thus,   by taking the supremum over $x\in \mathscr{H}$  we get that
  \begin{align*}
  \mathop {\sup }\limits_{\left\| x \right\| = 1}   \left| {\left\langle {ABx,x} \right\rangle } \right|&\le  \frac{1}{2} r \left(B \right)
  \mathop {\sup }\limits_{\left\| x \right\| = 1}
  \left\langle {\left(f^2\left( {\left| {A  } \right|} \right) +g^2\left( {\left| {A^*} \right|} \right)\right)x  ,x  } \right\rangle
  \\
  &=\frac{1}{2} r \left(B \right)
  w\left( {\left(f^2\left( {\left| {A  } \right|} \right) +g^2\left( {\left| {A^*} \right|} \right)\right)  } \right)
  \\
   & \left(\le \frac{1}{2} r \left(B \right) \left\| {\left(f^2\left( {\left| {A  } \right|} \right) +g^2\left( {\left| {A^*} \right|} \right)\right)} \right\|\right) \qquad\text{by\,\, \eqref{eq1.1}}.
    \end{align*}
    which proves the first inequality in \eqref{eq2.1}.
    \begin{align*}
    \mathop {\sup }\limits_{\left\| x \right\| = 1}   \left| {\left\langle {ABx,x} \right\rangle } \right|&\le \frac{1}{2} r \left(B \right)
    w\left( {\left(f^2\left( {\left| {A  } \right|} \right) +g^2\left( {\left| {A^*} \right|} \right)\right)  } \right)
    \\
    & \le \frac{1}{2} r \left(B \right) \left\| {\left(f^2\left( {\left| {A  } \right|} \right) +g^2\left( {\left| {A^*} \right|} \right)\right)} \right\|
    \end{align*}
The second inequality follows by employing the \eqref{fact1} on
the last inequality above i.e.,
\begin{align*}
\left\| {\left(f^2\left( {\left| {A  } \right|} \right) +g^2\left(
{\left| {A^*} \right|} \right)\right)} \right\| &\le \frac{1}{2}
r \left(B \right) \left\{ \left\| {f^2 \left( {\left| A \right|}
\right)} \right\| + \left\| {g^2 \left( {\left| {A^* } \right|}
\right)} \right\|  \right.
\\
&\qquad\left.+  \sqrt {\left( {\left\| {f^2 \left( {\left| A
\right|} \right)} \right\| - \left\| {g^2 \left( {\left| {A^* }
\right|} \right)} \right\|} \right)^2  + 4\left\| {f\left( {\left|
A \right|} \right)g\left( {\left| {A^* } \right|} \right)}
\right\|^2 } \right\}
    \end{align*}
 also, by using \eqref{fact3} with $A=1_{\mathscr{H}}$, we get
\begin{align*}
\mathop {\sup }\limits_{\left\| x \right\| = 1}   \left|
{\left\langle {ABx,x} \right\rangle } \right| &\le\frac{1}{8}
\left( {\left\| B \right\| + \left\| {B^2 } \right\|^{1/2} }
\right)\left\{ \left\| {f^2 \left( {\left| A \right|} \right)}
\right\| + \left\| {g^2 \left( {\left| {A^* } \right|} \right)}
\right\|  \right.
\\
&\qquad\left.+  \sqrt {\left( {\left\| {f^2 \left( {\left| A
\right|} \right)} \right\| - \left\| {g^2 \left( {\left| {A^* }
\right|} \right)} \right\|} \right)^2  + 4\left\| {f\left( {\left|
A \right|} \right)g\left( {\left| {A^* } \right|} \right)}
\right\|^2 } \right\}
\end{align*}
and this proves the second inequality in \eqref{eq2.1}.

 \end{proof}
\begin{corollary}
\label{cor1}    Let  $A,B\in \mathscr{B}\left( \mathscr{H}\right)
$ such that $|A|B= B^*|A|$. Then,
\begin{align}
 w \left( AB \right) &\le \frac{1}{2} r \left(B \right)
 w\left( {\left( \left| {A  } \right|^{2\alpha}  +\left| {A^*} \right|^{2\left(1-\alpha\right)}  \right)  } \right)\nonumber
\\
&\le\frac{1}{8} \left( {\left\| B \right\| + \left\| {B^2 } \right\|^{1/2} } \right)\left\{ \left\| { \left| A \right|^{2\alpha} } \right\| + \left\| { \left| {A^* } \right|^{2\left(1-\alpha\right)} } \right\|  \right.\label{eq3.2}\\
&\qquad\left.+  \sqrt {\left( {\left\| { \left| A \right|^{
2\alpha} } \right\| - \left\| { \left| {A^* }
\right|^{2\left(1-\alpha\right)} } \right\|} \right)^2  + 4\left\|
{ \left| A \right|^{\alpha} \left| {A^* } \right|^{1-\alpha} }
\right\|^2 } \right\}.\nonumber
\end{align}
for all $0\le \alpha \le 1$.
\end{corollary}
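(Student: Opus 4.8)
The plan is to obtain Corollary \ref{cor1} as an immediate specialization of Theorem \ref{thm1}, choosing the pair of functions $(f,g)$ appropriately. For a fixed $\alpha \in [0,1]$, I would take $f(t) = t^{\alpha}$ and $g(t) = t^{1-\alpha}$ on $[0,\infty)$. Both are nonnegative and continuous there, and they satisfy the required multiplicative constraint $f(t)g(t) = t^{\alpha} t^{1-\alpha} = t$ for every $t \ge 0$. The commutation hypothesis $|A|B = B^*|A|$ is unchanged, so all the assumptions of Theorem \ref{thm1} are in force.

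Next I would translate the functional-calculus quantities appearing in \eqref{eq2.1} into power form. Since $f(|A|) = |A|^{\alpha}$ and $g(|A^*|) = |A^*|^{1-\alpha}$, it follows that $f^2(|A|) = |A|^{2\alpha}$, $g^2(|A^*|) = |A^*|^{2(1-\alpha)}$, and $f(|A|)g(|A^*|) = |A|^{\alpha}|A^*|^{1-\alpha}$. Substituting these three identities into the two successive bounds of \eqref{eq2.1} reproduces verbatim the two inequalities displayed in \eqref{eq3.2}; in particular the term $\|f(|A|)g(|A^*|)\|^2$ inside the square root becomes $\||A|^{\alpha}|A^*|^{1-\alpha}\|^2$, as claimed.

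There is essentially no analytic obstacle here, as the whole argument is a substitution; the only point that merits a remark is the admissibility of the endpoints $\alpha = 0$ and $\alpha = 1$. For $\alpha = 0$ one has $f \equiv 1$ and $g(t) = t$, while for $\alpha = 1$ the roles of $f$ and $g$ are reversed; in both degenerate cases $f$ and $g$ remain nonnegative continuous functions on $[0,\infty)$ with $f(t)g(t) = t$, so Theorem \ref{thm1} continues to apply. Hence the conclusion holds for the entire closed range $0 \le \alpha \le 1$, which completes the derivation.
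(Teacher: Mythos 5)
Your proposal is correct and coincides with the paper's own proof, which consists precisely of setting $f(t)=t^{\alpha}$ and $g(t)=t^{1-\alpha}$, $0\le\alpha\le1$, in Theorem \ref{thm1}. Your additional verification of the hypotheses and of the endpoint cases $\alpha=0,1$ only makes explicit what the paper leaves implicit.
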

\begin{proof}
    Setting $f(t)=t^{\alpha}$ and $g(t)=t^{1-\alpha}$, $0\le\alpha \le 1$, $t\ge0$ in Theorem \ref{thm1}.
\end{proof}
\begin{corollary}
    \label{cor2}    Let  $A,B\in \mathscr{B}\left( \mathscr{H}\right)$ such that $|A|B= B^*|A|$.
In particular, we have
\begin{align}
\label{eq3.3} w\left( A B\right) \le   \frac{1}{4}  \left( \left\|
B \right\| + \left\| {B^2 } \right\|^{1/2} \right)\cdot \left(
\left\| A \right\| + \left\| {A^2 } \right\|^{1/2} \right)
\end{align}
\end{corollary}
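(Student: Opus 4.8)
The plan is to derive \eqref{eq3.3} as the special case $\alpha = \tfrac12$ of Corollary \ref{cor1}, after simplifying the right-hand side of \eqref{eq3.2}. Setting $\alpha = \tfrac12$ turns the three operators appearing there into $|A|^{2\alpha} = |A|$, $|A^*|^{2(1-\alpha)} = |A^*|$, and $|A|^{\alpha}|A^*|^{1-\alpha} = |A|^{1/2}|A^*|^{1/2}$. The virtue of this choice is that the two ``diagonal'' norms coincide: since $\||A|\| = \|A\|$ and $\||A^*|\| = \|A^*\| = \|A\|$, the difference $\||A|\| - \||A^*|\|$ occurring under the square root in \eqref{eq3.2} vanishes.

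With that difference gone, the braced factor in \eqref{eq3.2} reduces to $2\|A\| + 2\||A|^{1/2}|A^*|^{1/2}\|$, and the constant $\tfrac18$ becomes $\tfrac14$, leaving
\begin{align*}
w(AB) \le \frac{1}{4}(\|B\| + \|B^2\|^{1/2})(\|A\| + \||A|^{1/2}|A^*|^{1/2}\|).
\end{align*}
Hence the entire task comes down to showing that the cross term is controlled by $\|A^2\|^{1/2}$, i.e.\ $\||A|^{1/2}|A^*|^{1/2}\| \le \|A^2\|^{1/2}$.

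For this I would first invoke \eqref{fact2} with the positive operators $|A|$ and $|A^*|$ to get $\||A|^{1/2}|A^*|^{1/2}\| \le \||A||A^*|\|^{1/2}$, so it remains to prove $\||A||A^*|\| \le \|A^2\|$. This last estimate is the step I expect to be the main obstacle, and I would establish it via the polar decomposition $A = U|A|$. Then $|A^*| = U|A|U^*$, so $|A||A^*| = |A|U|A|U^*$; using $A^2 = U|A|U|A|$ together with $U^*U|A| = |A|$ (the initial space of $U$ being the closure of the range of $|A|$), one finds $|A||A^*| = U^*A^2U^*$. Since $U$ is a partial isometry, $\|U^*\| \le 1$, and therefore $\||A||A^*|\| = \|U^*A^2U^*\| \le \|A^2\|$.

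Substituting $\||A|^{1/2}|A^*|^{1/2}\| \le \|A^2\|^{1/2}$ into the displayed inequality yields $w(AB) \le \tfrac14(\|B\| + \|B^2\|^{1/2})(\|A\| + \|A^2\|^{1/2})$, which is precisely \eqref{eq3.3}. As a consistency check, the bound factors as the product of two Kittaneh estimates of the form \eqref{eq1.2}, one for $B$ and one for $A$, which confirms that the numerical constant $\tfrac14 = \tfrac12\cdot\tfrac12$ is correct.
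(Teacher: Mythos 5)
Your proposal is correct and follows essentially the same route as the paper: specialize Corollary \ref{cor1} at $\alpha=\tfrac12$, use $\left\|\,|A|\,\right\|=\left\|\,|A^*|\,\right\|=\|A\|$ to collapse the square-root term, and then control the cross term via \eqref{fact2}. The only difference is that you explicitly justify the remaining step $\left\|\,|A|\,|A^*|\,\right\|\le\|A^2\|$ by the polar decomposition $A=U|A|$ (giving $|A|\,|A^*|=U^*A^2U^*$), a point the paper's proof leaves implicit when it cites \eqref{fact2}; this makes your write-up more complete than the original.
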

\begin{proof}
 Setting $\alpha=\frac{1}{2}$ in \eqref{eq3.2} we get
\begin{align*}
w \left( AB \right) &\le \frac{1}{2} r \left(B \right) w\left(
{\left( \left| {A  } \right|^{2\alpha}  +\left| {A^*}
\right|^{2\left(1-\alpha\right)}  \right)  } \right)\nonumber
\\
&\le\frac{1}{8} \left( {\left\| B \right\| + \left\| {B^2 } \right\|^{1/2} } \right)\left\{ \left\| { \left| A \right| } \right\| + \left\| { \left| {A^* } \right| } \right\|  \right.\qquad (\text{by}\,\,\eqref{fact3}\,\, with\,\, A=1_\mathscr{H})\\
&\qquad\left.+  \sqrt {\left( {\left\| { \left| A \right| }
\right\| - \left\| { \left| {A^* }
            \right| } \right\|} \right)^2  + 4\left\|
    { \left| A \right|^{1/2} \left| {A^* } \right|^{1/2} }
    \right\|^2 } \right\}.\nonumber
\\
&=\frac{1}{4}  \left( \left\| B \right\| + \left\| {B^2 }
\right\|^{1/2} \right)\cdot \left( \left\| A \right\| + \left\|
{A^2 } \right\|^{1/2} \right)
\end{align*}
where the last inequality follows from \eqref{fact2}  and using
the  fact  that  $\||A|\|=\|A^*|\|=\|A\|$
 and this proves  the desired result.
\end{proof}
A generalization of Theorem \ref{thm1}  is given as follows:
\begin{theorem}
\label{thm2}   Let  $A,B\in \mathscr{B}\left( \mathscr{H}\right) $
such that $|A|B= B^*|A|$. If $f, g$ be nonnegative continuous
functions   on $\left[0,\infty\right)$  satisfying $f(t)g(t) = t$,
$(t \ge 0)$. Then,
\begin{align}
w^{p} \left( AB \right)  &\le r^p\left(B\right)\cdot w \left(  {
\frac{1}{\alpha }f^{\alpha p} \left( {\left| {A  } \right|}
\right) + \frac{1}{\beta }g^{\beta p} \left( {\left| {A^* }
\right|} \right)} \right)
\nonumber \\
 &\le r^p\left(B\right)\cdot
 \left\| {\frac{1}{\alpha }f^{\alpha p} \left( {\left| {A  } \right|} \right) + \frac{1}{\beta }g^{\beta p} \left( {\left| {A^* } \right|} \right)} \right\|
 \label{eq3.4}
\end{align}
for all $p\ge1$, $\alpha \ge \beta >1 $ with
$\frac{1}{\alpha}+\frac{1}{\beta}=1$ and $\beta p \ge2$. Moreover
we have
\begin{align}
\label{eq3.5}w^{p} \left( AB \right) &\le \frac{1}{2^{p+1}} \cdot
\gamma \cdot \left( \left\| B \right\| + \left\| {B^2 }
\right\|^{1/2} \right)^p
\\
&\times\left\{ {\left\| {f^{\alpha p} \left( {\left| {A  }
\right|} \right)} \right\| + \left\| {g^{\beta p} \left( {\left|
{A^* } \right|} \right)} \right\|+\Phi\left(f,g;A\right)}
\right\}, \nonumber
\end{align}
where $\Phi\left(f,g;A\right):= \sqrt{\left[ {\left| {f^{\alpha p}
\left( {\left| {A  } \right|} \right)} \right| - \left\| {g^{\beta
p} \left( {\left| {A^* } \right|} \right)} \right\|} \right]^2  +
4\left\| {f^{p\alpha } \left( {\left| {A  } \right|}
\right)g^{p\beta } \left( {\left| {A^* } \right|} \right)}
\right\| }$ and $\gamma=
\max\{\frac{1}{\alpha},\frac{1}{\beta}\}$.

\end{theorem}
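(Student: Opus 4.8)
The plan is to run the argument of Theorem \ref{thm1} almost verbatim, but to replace the Power-Mean inequality \eqref{PMI} by the Power-Young inequality \eqref{YI} in order to manufacture the conjugate exponents $\alpha,\beta$. First I would put $y=x$ in Kittaneh's inequality \eqref{kittaneh.ineq}, obtaining $\left|\left\langle ABx,x\right\rangle\right|\le r(B)\,\|f(|A|)x\|\,\|g(|A^{*}|)x\|$ for every unit vector $x$, and then raise both sides to the power $p$. Writing $a=\|f(|A|)x\|$ and $b=\|g(|A^{*}|)x\|$, the right side becomes $r^{p}(B)(ab)^{p}=r^{p}(B)\,a^{p}b^{p}$, and applying \eqref{YI} to the product of $a^{p}$ and $b^{p}$ yields
\[
\left|\left\langle ABx,x\right\rangle\right|^{p}\le r^{p}(B)\left(\frac{1}{\alpha}\|f(|A|)x\|^{\alpha p}+\frac{1}{\beta}\|g(|A^{*}|)x\|^{\beta p}\right).
\]

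The next step converts the vector norms into quadratic forms. Since $f(|A|)$ and $g(|A^{*}|)$ are positive, $\|f(|A|)x\|^{\alpha p}=\left\langle f^{2}(|A|)x,x\right\rangle^{\alpha p/2}$ and similarly for $g$. Here the McCarthy inequality \eqref{mc1} enters, giving $\left\langle f^{2}(|A|)x,x\right\rangle^{\alpha p/2}\le\left\langle f^{\alpha p}(|A|)x,x\right\rangle$ and the companion bound for $g$. This is the step where the hypotheses are genuinely consumed: \eqref{mc1} demands the exponents $\alpha p/2$ and $\beta p/2$ to be at least $1$, and the assumption $\beta p\ge 2$ together with $\alpha\ge\beta$ furnishes exactly $\alpha p\ge\beta p\ge 2$. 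Substituting, one gets $\left|\left\langle ABx,x\right\rangle\right|^{p}\le r^{p}(B)\left\langle\left(\tfrac{1}{\alpha}f^{\alpha p}(|A|)+\tfrac{1}{\beta}g^{\beta p}(|A^{*}|)\right)x,x\right\rangle$; taking the supremum over unit vectors and using that the operator in brackets is positive, so that the supremum of its quadratic form is its numerical radius, produces the first inequality of \eqref{eq3.4}, and \eqref{eq1.1} immediately gives the second.

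For the refinement \eqref{eq3.5} I would estimate the norm of $P:=\tfrac{1}{\alpha}f^{\alpha p}(|A|)$ plus $Q:=\tfrac{1}{\beta}g^{\beta p}(|A^{*}|)$ in three moves. Since $P,Q\ge 0$ and both coefficients are bounded by $\gamma=\max\{1/\alpha,1/\beta\}$, monotonicity of the norm on positive operators gives $\|P+Q\|\le\gamma\,\|f^{\alpha p}(|A|)+g^{\beta p}(|A^{*}|)\|$. Next, apply \eqref{fact1} to the two positive operators $f^{\alpha p}(|A|)$ and $g^{\beta p}(|A^{*}|)$ and then \eqref{fact2} to replace $\|(f^{\alpha p}(|A|))^{1/2}(g^{\beta p}(|A^{*}|))^{1/2}\|^{2}$ by $\|f^{\alpha p}(|A|)g^{\beta p}(|A^{*}|)\|$, which reproduces precisely $\Phi(f,g;A)$. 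Finally, bound the spectral factor by invoking \eqref{fact3} with $A=1_{\mathscr{H}}$, giving $r(B)\le\tfrac{1}{2}(\|B\|+\|B^{2}\|^{1/2})$ and hence $r^{p}(B)\le 2^{-p}(\|B\|+\|B^{2}\|^{1/2})^{p}$; assembling the $2^{-p}$, the $\tfrac{1}{2}$ from \eqref{fact1}, and the factor $\gamma$ yields the constant $2^{-(p+1)}\gamma$ of \eqref{eq3.5}. The only delicate point is the bookkeeping of exponents in the second paragraph; everything else is a direct concatenation of the cited lemmas.
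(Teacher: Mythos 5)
Your proposal is correct and follows essentially the same route as the paper's own proof: the mixed Schwarz inequality \eqref{kittaneh.ineq} with $y=x$, the Power--Young inequality \eqref{YI}, the McCarty inequality \eqref{mc1} (your remark that $\beta p\ge 2$ together with $\alpha\ge\beta$ is exactly what legitimizes this step is a point the paper leaves implicit), then \eqref{eq1.1}, the factor-$\gamma$ comparison, \eqref{fact1}, \eqref{fact2}, and \eqref{fact3} with $A=1_{\mathscr{H}}$ to assemble the constant $2^{-(p+1)}\gamma$. The only difference---applying Young to the vector norms before rewriting them as quadratic forms rather than after---is purely cosmetic.
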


\begin{proof}
Using the  mixed Schwarz inequality \eqref{kittaneh.ineq},  we
have
\begin{align*}
&\left| {\left\langle {ABx,x} \right\rangle } \right|^p\\ &\le r^p\left(B\right)\left\| {f\left( {\left| A \right|} \right)x} \right\|^p\left\| {g\left( {\left| {A^* } \right|} \right)x} \right\|^p \\
&=  r^p\left(B\right)\left\langle {f^2\left( {\left| {A } \right|} \right)x  ,x  } \right\rangle ^{\frac{p}{2}} \left\langle {g^2\left( {\left| {A^*} \right|} \right)x  ,x  } \right\rangle ^{\frac{p}{2}}  \\
&\le r^p\left(B\right) \left[\frac{1}{\alpha }\left\langle {f^2 \left( {\left| {A } \right|} \right)x  ,x  } \right\rangle ^{\frac{{\alpha p}}{2}}  + \frac{1}{\beta }\left\langle {g^2 \left( {\left| {A^* } \right|} \right)x  ,x  } \right\rangle ^{\frac{{\beta p}}{2}}  \right] \qquad \text{(by \eqref{YI})}\\
&\le  r^p\left(B\right)  \left[\frac{1}{\alpha }\left\langle {f^{\alpha p} \left( {\left| {A  } \right|} \right)x  ,x  } \right\rangle  + \frac{1}{\beta }\left\langle {g^{\beta p} \left( {\left| {A^* } \right|} \right)x  ,x  } \right\rangle \right] \qquad \text{(by \eqref{mc1})}\\
&=  r^p\left(B\right)  \left\langle {\left[
{\frac{1}{\alpha}f^{\alpha p} \left( {\left| {A   } \right|}
\right) +\frac{1}{\beta }g^{\beta p} \left( {\left| {A^* }\right|}
\right)} \right]x  ,x } \right\rangle.
\end{align*}
Taking the supremum over $x \in \mathscr{H}$, we obtain the first
inequality in \eqref{eq3.4}. To obtain the second inequality, by
utilizing \eqref{eq1.1} on the first inequality in  \eqref{eq3.4}
we have
\begin{align*}
&w \left(  { \frac{1}{\alpha }f^{\alpha p} \left( {\left| {A  }
\right|} \right) + \frac{1}{\beta }g^{\beta p} \left( {\left| {A^*
} \right|} \right)} \right)
\\
&\le\left\| {\frac{1}{\alpha }f^{\alpha p} \left( {\left| {A  }
\right|} \right) + \frac{1}{\beta }g^{\beta p} \left( {\left| {A^*
} \right|} \right)} \right\|
\\
&\le \max\{\frac{1}{\alpha},\frac{1}{\beta}\}\cdot \left\| {
f^{\alpha p} \left( {\left| {A  } \right|} \right) +  g^{\beta p}
\left( {\left| {A^* } \right|} \right)} \right\|
\\
&\le \frac{1}{2}\gamma\left( {\left\| {f^{\alpha p} \left( {\left| {A  } \right|} \right)} \right\| + \left\| {g^{\beta p} \left( {\left| {A^* } \right|} \right)} \right\|} \right. \qquad \qquad\qquad \qquad(\text{by\,\, \eqref{fact1} })\\
&\qquad\left. { + \sqrt {\left[ {\left| {f^{\alpha p} \left(
{\left| {A  } \right|} \right)} \right| - \left\| {g^{\beta p}
\left( {\left| {A^* } \right|} \right)} \right\|} \right]^2  +
4\left\| {f^{p\alpha/2 } \left( {\left| {A  } \right|}
\right)g^{p\beta/2 } \left( {\left| {A^* } \right|} \right)}
\right\|^2 } } \right)
\\
&\le \frac{1}{2}\gamma\left( {\left\| {f^{\alpha p} \left( {\left| {A  } \right|} \right)} \right\| + \left\| {g^{\beta p} \left( {\left| {A^* } \right|} \right)} \right\|} \right. \qquad \qquad \qquad\qquad (\text{by\,\, \eqref{fact2} })\\
&\qquad\left. { + \sqrt {\left[ {\left| {f^{\alpha p} \left(
                {\left| {A  } \right|} \right)} \right| - \left\| {g^{\beta p}
                \left( {\left| {A^* } \right|} \right)} \right\|} \right]^2  +
        4\left\| {f^{p\alpha } \left( {\left| {A  } \right|}
            \right)g^{p\beta } \left( {\left| {A^* } \right|} \right)}
        \right\| } } \right).
\end{align*}
Hence, by substituting all in  \eqref{eq3.4} we get
\begin{align*}
w^{p} \left( AB \right)  &\le r^p\left(B\right)\cdot w \left(  {
\frac{1}{\alpha }f^{\alpha p} \left( {\left| {A  } \right|}
\right) + \frac{1}{\beta }g^{\beta p} \left( {\left| {A^* }
\right|} \right)} \right)
\nonumber \\
&\le r^p\left(B\right)\cdot \left\| {\frac{1}{\alpha }f^{\alpha p}
\left( {\left| {A  } \right|} \right) + \frac{1}{\beta }g^{\beta
p} \left( {\left| {A^* } \right|} \right)} \right\|
\\
 &\le \frac{1}{2^{p+1}} \cdot
\gamma \cdot \left( \left\| B \right\| + \left\| {B^2 }
\right\|^{1/2} \right)^p
\\
&\qquad\times\left\{ {\left\| {f^{\alpha p} \left( {\left| {A  }
            \right|} \right)} \right\| + \left\| {g^{\beta p} \left( {\left|
            {A^* } \right|} \right)} \right\|+\Phi\left(f,g;A\right)}
\right\},
\end{align*}
where the last inequality follows from  \eqref{fact3} with
$A=1_{\mathscr{H}}$ and this is yield the required result, where
$\Phi\left(f,g;A\right)$ is defined above.
\end{proof}

A generalization of Sattari \etal inequality which was obtained in
\cite{SMY} is given as follows:
\begin{theorem}
\label{thm3}    Let  $A,B\in \mathscr{B}\left( \mathscr{H}\right)
$ such that
\begin{enumerate}
\item $AB=BA$, and

\item $|A^2|B^2=\left(B^2\right)^*|A^2|$.
\end{enumerate}
 If $f, g$ be nonnegative
continuous functions   on $\left[0,\infty\right)$  satisfying
$f(t)g(t) = t$, $(t \ge 0)$. Then,
\begin{multline}
w^{2p} \left( AB \right) \le
\frac{1}{2} \left\| AB \right\|^{2p}  + \frac{\gamma}{{2^{p+2} }}\left( {\left\| {B^2 } \right\| + \left\| {B^4 } \right\|^{1/2} } \right)^p \\
\times \left\{ {\left\| {f^{\alpha p} \left( {\left| {A  }
\right|} \right)} \right\| + \left\| {g^{\beta p} \left( {\left|
{A^* } \right|} \right)} \right\| +\Phi\left(f,g;A\right)  }
\right\}
  \label{eq3.6}
\end{multline}
where $\Phi\left(f,g;A\right)$ is defined in Theorem \ref{thm2},
and $\gamma= \max\{\frac{1}{\alpha},\frac{1}{\beta}\}$, for all
$p\ge1$, $\alpha \ge \beta >1 $ with
$\frac{1}{\alpha}+\frac{1}{\beta}=1$ and $\beta p \ge2$.
\end{theorem}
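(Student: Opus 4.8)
The plan is to adapt the Buzano-based argument that Dragomir used for \eqref{eq1.5}, splitting $w^{2p}(AB)$ into a norm part and a ``second power'' part, then eliminating the second power via the commutativity hypothesis and estimating what remains by Theorem \ref{thm2}. Recall the Buzano inequality: for all $a,b,e\in\mathscr{H}$ with $\left\|e\right\|=1$,
\begin{align*}
\left|\left\langle a,e\right\rangle \left\langle e,b\right\rangle \right|\le \frac{1}{2}\left(\left\|a\right\|\left\|b\right\|+\left|\left\langle a,b\right\rangle \right|\right).
\end{align*}
Taking $e=x$ with $\left\|x\right\|=1$, $a=ABx$ and $b=\left(AB\right)^*x$, and using $\left\langle ABx,x\right\rangle \left\langle x,\left(AB\right)^*x\right\rangle =\left\langle ABx,x\right\rangle ^2$ together with $\left\langle ABx,\left(AB\right)^*x\right\rangle =\left\langle \left(AB\right)^2x,x\right\rangle$, and bounding $\left\|ABx\right\|\left\|\left(AB\right)^*x\right\|\le \left\|AB\right\|^2$ for unit $x$, I obtain
\begin{align*}
\left|\left\langle ABx,x\right\rangle \right|^2\le \frac{1}{2}\left(\left\|AB\right\|^2+\left|\left\langle \left(AB\right)^2x,x\right\rangle \right|\right).
\end{align*}

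Next I would invoke the first hypothesis $AB=BA$ to factor $\left(AB\right)^2=ABAB=A^2B^2$, raise the last inequality to the power $p\ge1$, and apply the convexity bound $\left(u+v\right)^p\le 2^{p-1}\left(u^p+v^p\right)$ (which is exactly the case $\alpha=\tfrac12$ of the Power--Mean inequality \eqref{PMI}) to get
\begin{align*}
\left|\left\langle ABx,x\right\rangle \right|^{2p}\le \frac{1}{2}\left(\left\|AB\right\|^{2p}+\left|\left\langle A^2B^2x,x\right\rangle \right|^p\right).
\end{align*}
Since $\left\|AB\right\|^{2p}$ is constant in $x$, taking the supremum over unit vectors then gives the key reduction
\begin{align*}
w^{2p}\left(AB\right)\le \frac{1}{2}\left\|AB\right\|^{2p}+\frac{1}{2}\,w^p\left(A^2B^2\right).
\end{align*}

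It then remains to estimate $w^p\left(A^2B^2\right)$, and here I would apply Theorem \ref{thm2} verbatim to the pair $\left(A^2,B^2\right)$ in place of $\left(A,B\right)$. This is precisely where the \emph{second} hypothesis enters: $\left|A^2\right|B^2=\left(B^2\right)^*\left|A^2\right|$ is exactly the commutation condition that Theorem \ref{thm2} demands of that pair, while the constraints $p\ge1$, $\alpha\ge\beta>1$, $\tfrac1\alpha+\tfrac1\beta=1$, $\beta p\ge2$ transfer unchanged. Estimate \eqref{eq3.5}, with $\left\|B\right\|\mapsto\left\|B^2\right\|$ and $\left\|B^2\right\|^{1/2}\mapsto\left\|B^4\right\|^{1/2}$, yields a bound of the form $\tfrac{\gamma}{2^{p+1}}\left(\left\|B^2\right\|+\left\|B^4\right\|^{1/2}\right)^p\{\cdots\}$; substituting this into the reduction above and absorbing the extra $\tfrac12$ by passing from $2^{p+1}$ to $2^{p+2}$ produces \eqref{eq3.6}.

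The steps are individually routine once the machinery is in place, so the main conceptual point is recognizing that the two hypotheses serve genuinely distinct functions and must be kept apart: commutativity is used \emph{only} to carry out the factorization $\left(AB\right)^2=A^2B^2$, whereas the Kato-type identity is used \emph{only} to license Theorem \ref{thm2} on $\left(A^2,B^2\right)$. The one point I would verify with care is the bookkeeping of the functional-calculus terms, since the natural derivation produces $f^{\alpha p}\left(\left|A^2\right|\right)$, $g^{\beta p}\left(\left|\left(A^2\right)^*\right|\right)$ and $\Phi\left(f,g;A^2\right)$; these arguments being $A^2$ rather than $A$ is the reading consistent both with the factor $\left\|B^2\right\|+\left\|B^4\right\|^{1/2}$ and with the second hypothesis, and I would make sure \eqref{eq3.6} is interpreted accordingly.
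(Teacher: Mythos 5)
Your proposal is correct and follows essentially the same route as the paper: the paper likewise derives the Buzano-type inequality \eqref{key} (from Dragomir's refinement of Cauchy--Schwarz), applies it with $e=u$, $x=ABu$, $y=B^*A^*u$, uses commutativity to write $(AB)^2=A^2B^2$, and then runs the Theorem \ref{thm2} machinery on the pair $\left(A^2,B^2\right)$. Your closing remark about the functional-calculus bookkeeping is also on point: the paper's own derivation produces $f^{\alpha p}\left(\left|A^2\right|\right)$, $g^{\beta p}\left(\left|\left(A^2\right)^*\right|\right)$ and $\Phi\left(f,g;A^2\right)$, so the appearance of $A$ rather than $A^2$ in the statement of \eqref{eq3.6} is an inconsistency in the paper's statement, not a flaw in your argument.
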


\begin{proof}
Let us first note that the Dragomir refinement  of  Cauchy-Schwarz
inequality  reads that \cite{D5}:
\begin{align*}
\left| {\left\langle {x,y} \right\rangle } \right| \le \left|
{\left\langle {x,e} \right\rangle \left\langle {e,y} \right\rangle
} \right| + \left| {\left\langle {x,y} \right\rangle  -
\left\langle {x,e} \right\rangle \left\langle {e,y} \right\rangle
} \right| \le \left\| x \right\|\left\| y \right\|
\end{align*}
for all $x,y,e\in \mathscr{H}$ with $\|e\|=1$.

It's easy to deduce the inequality
\begin{align}
\left| {\left\langle {x,e} \right\rangle \left\langle {e,y}
\right\rangle } \right| \le \frac{1}{2}\left( {\left|
{\left\langle {x,y} \right\rangle } \right| + \left\| x
\right\|\left\| y \right\|} \right).\label{key}
\end{align}
Setting $e = u$, $x = ABu$, $y = B^*A^*u$ in \eqref{key} and using
the Power-Mean inequality \eqref{PMI} we get
\begin{align*}
\left| {\left\langle {ABu ,u  } \right\rangle \left\langle {u
,B^*A^* u } \right\rangle } \right| &\le \frac{1}{2}\left( {\left|
{\left\langle {ABu ,B^*A^*u } \right\rangle } \right| + \left\|
{Au } \right\|\left\| {B^*A^* u } \right\|} \right)
\\
&\le \left( {\frac{{\left| {\left\langle {(AB)^2 u ,u  }
\right\rangle } \right|^p  + \left\| {Au  } \right\|^p \left\|
{A^* u  } \right\|^p }}{{2^p }}} \right)^{\frac{1}{p}},
\end{align*}
since $A,B$ are commutative so that $(AB)^2=A^2B^2$. Equivalently,
we may write
\begin{align}
  \left| {\left\langle {ABu,u } \right\rangle } \right|^{2p}  \le \frac{1}{2}\left( {\left| {\left\langle {(AB)^2 u ,u } \right\rangle } \right|^p  + \left\| {ABu } \right\|^p \left\| {B^*A^* u } \right\|^p } \right).\label{eq3.8}
\end{align}
Now, using the  mixed Schwarz inequality \eqref{kittaneh.ineq} by
replacing $A,B$ by $A^2,B^2$; respectively, then we have
\begin{align*}
&\left| {\left\langle {(AB)^2x,x} \right\rangle } \right|^p\\ &\le r^p\left(B^2\right)\left\| {f\left( {\left| A^2 \right|} \right)x} \right\|^p\left\| {g\left( {\left| {\left(A^2\right)^* } \right|} \right)x} \right\|^p \\
&=  r^p\left(B^2\right)\left\langle {f^2\left( {\left| {A^2 } \right|} \right)x  ,x  } \right\rangle ^{\frac{p}{2}} \left\langle {g^2\left( {\left| {\left(A^2\right)^*} \right|} \right)x  ,x  } \right\rangle ^{\frac{p}{2}}  \\
&\le r^p\left(B^2\right) \left[\frac{1}{\alpha }\left\langle {f^2 \left( {\left| {A^2 } \right|} \right)x  ,x  } \right\rangle ^{\frac{{\alpha p}}{2}}  + \frac{1}{\beta }\left\langle {g^2 \left( {\left| {\left(A^2\right)^* } \right|} \right)x  ,x  } \right\rangle ^{\frac{{\beta p}}{2}}  \right] \qquad \text{(by \eqref{YI})}\\
&\le  r^p\left(B^2\right)  \left[\frac{1}{\alpha }\left\langle {f^{\alpha p} \left( {\left| {A^2 } \right|} \right)x  ,x  } \right\rangle  + \frac{1}{\beta }\left\langle {g^{\beta p} \left( {\left| {\left(A^2\right)^* } \right|} \right)x  ,x  } \right\rangle \right] \qquad \text{(by \eqref{mc1})}\\
&=  r^p\left(B^2\right)  \left\langle {\left[
{\frac{1}{\alpha}f^{\alpha p} \left( {\left| {A^2  } \right|}
\right) +\frac{1}{\beta }g^{\beta p} \left( {\left|
{\left(A^2\right)^* }\right|} \right)} \right]x  ,x }.
\right\rangle.
\end{align*}
Substituting in \eqref{eq3.8} and taking the supremum over $x \in
\mathscr{H}$, and finally proceed as in the proof of Theorem
\ref{thm2} we get  the desired inequality. We shall omit the
details.
\end{proof}

 \begin{corollary}
Under the assumptions of Theorem \ref{thm3}, we have
\begin{align*}
w^{2} \left( AB \right) \le \frac{1}{2} \left\| AB \right\|^{2}  +
\frac{1}{{8 }}\left( {\left\| {B^2 } \right\| + \left\| {B^4 }
\right\|^{1/2} } \right) \left( {\left\| {A } \right\| + \left\|
{A^2 } \right\|^{1/2} } \right)
\end{align*}
 \end{corollary}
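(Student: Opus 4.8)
The plan is to read off the corollary as the single specialization $p=1$, $\alpha=\beta=2$, and $f(t)=g(t)=t^{1/2}$ in the master inequality \eqref{eq3.6} of Theorem \ref{thm3}, in exactly the spirit in which Corollary \ref{cor2} is extracted from \eqref{eq3.2}. First I would check admissibility of these parameters: one has $\alpha\ge\beta>1$, $\tfrac1\alpha+\tfrac1\beta=1$, and $\beta p=2\ge2$, so every hypothesis of Theorem \ref{thm3} is met, and $\gamma=\max\{\tfrac12,\tfrac12\}=\tfrac12$. Since then $w^{2p}(AB)=w^2(AB)$ and the leading summand $\tfrac12\|AB\|^{2p}$ collapses to $\tfrac12\|AB\|^2$, all that remains is to evaluate the bracketed factor and the constant in the second summand.

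With these choices $f^{\alpha p}(t)=g^{\beta p}(t)=t$, so $f^{\alpha p}(|A|)=|A|$ and $g^{\beta p}(|A^*|)=|A^*|$; the two leading norms inside the braces are therefore $\||A|\|=\|A\|$ and $\||A^*|\|=\|A\|$, using the identity $\||A|\|=\||A^*|\|=\|A\|$ already invoked in the proof of Corollary \ref{cor2}. Consequently the difference term in $\Phi(f,g;A)$ vanishes and $\Phi(f,g;A)=\sqrt{4\,\||A|\,|A^*|\|}=2\,\||A|\,|A^*|\|^{1/2}$. The crux is then the norm identity $\||A|\,|A^*|\|=\|A^2\|$: from the polar decomposition $A=U|A|$ one has $|A^*|=U|A|U^*$, whence both $\||A|\,|A^*|\|=\||A|U|A|U^*\|$ and $\|A^2\|=\|U|A|U|A|\|$ reduce to $\||A|U|A|\|$ by unitary invariance of the norm. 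Thus the braces simplify to $2\bigl(\|A\|+\|A^2\|^{1/2}\bigr)$.

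It then remains only to collect constants: the prefactor $\tfrac{\gamma}{2^{p+2}}=\tfrac{1/2}{8}=\tfrac1{16}$ multiplies $\bigl(\|B^2\|+\|B^4\|^{1/2}\bigr)$ and the simplified braces $2\bigl(\|A\|+\|A^2\|^{1/2}\bigr)$, and the stray factor $2$ reduces $\tfrac1{16}$ to $\tfrac18$, producing exactly $\tfrac18\bigl(\|B^2\|+\|B^4\|^{1/2}\bigr)\bigl(\|A\|+\|A^2\|^{1/2}\bigr)$; adjoining the surviving first term $\tfrac12\|AB\|^2$ gives the claim. I expect the only step meriting genuine attention to be the identity $\||A|\,|A^*|\|=\|A^2\|$ (together with $\||A|\|=\|A\|$), since the rest is pure bookkeeping; note that this is precisely the same simplification --- here stated with full powers rather than with the half-powers of \eqref{eq3.2} that force the use of \eqref{fact2} --- which drives the passage from \eqref{eq3.2} to \eqref{eq3.3}.
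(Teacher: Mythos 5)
Your proposal is correct and coincides with the paper's intended derivation: the corollary is precisely \eqref{eq3.6} specialized to $p=1$, $\alpha=\beta=2$, $f(t)=g(t)=t^{1/2}$ (so $\gamma=\tfrac12$ and $\tfrac{\gamma}{2^{p+2}}=\tfrac1{16}$), simplified via $\left\| \,|A|\, \right\|=\left\| \,|A^*|\, \right\|=\|A\|$ and $\left\| \,|A|\,|A^*|\, \right\|=\|A^2\|$ --- the same bookkeeping by which the paper passes from \eqref{eq3.2} to \eqref{eq3.3}. One pedantic caveat: in infinite dimensions the $U$ in $A=U|A|$ is only a partial isometry rather than a unitary, so in place of ``unitary invariance'' one should note that $U^*U$ acts as the identity on $\overline{\operatorname{ran}}\,|A|$; with that adjustment your key identity $\left\| \,|A|\,|A^*|\, \right\|=\|A^2\|$ holds exactly as claimed.
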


\begin{theorem}
\label{thm4}Let  $A,B\in \mathscr{B}\left( \mathscr{H}\right)^+$
such that $AB$ is contraction.  Then
\begin{multline}
w^{2p}\left(AB\right) \le \left[\left\| {A^p } \right\| - \ell
\left( {\left\| {\left| {\left( {A - \left\| A \right\|} \right)}
\right|^p } \right\|} \right)\right]
\\
\times \left[\left\| {B^p } \right\| - \ell \left( {\left\|
{\left| {\left( {B - \left\| B \right\|} \right)} \right|^p }
\right\|} \right)\right]
\end{multline}
for all $p\ge2$.  In particular, we have
\begin{align*}
w^{2p}\left(A\right)\le  \left\| {A^p } \right\| - \ell \left(
{\left\| {\left| {\left( {A - \left\| A \right\|} \right)}
\right|^p } \right\|} \right)
\end{align*}
for every positive contraction $A$.
\end{theorem}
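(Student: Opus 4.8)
The plan is to reduce $w^{2p}(AB)$ to the quadratic form and then feed the two positive operators separately into the refined Cauchy--Schwarz inequality \eqref{fact4} of Lemma \ref{lemma7}. Since $w^{2p}(AB)=\sup_{\|x\|=1}|\langle ABx,x\rangle|^{2p}$ and $A,B\in\mathscr B(\mathscr H)^{+}$ are self-adjoint, I would first write $\langle ABx,x\rangle=\langle Bx,Ax\rangle$ and apply the ordinary Cauchy--Schwarz inequality to separate the two operators, $|\langle ABx,x\rangle|\le\|Ax\|\,\|Bx\|$. To bring the exponents into line with the single powers $\|A^{p}\|,\|B^{p}\|$ demanded by the statement, I would use the elementary operator inequality $A^{2}\le\|A\|\,A$ (valid for positive $A$, because $A$ and $\|A\|1_{\mathscr H}-A$ commute and are positive), which gives $\|Ax\|^{2}=\langle A^{2}x,x\rangle\le\|A\|\langle Ax,x\rangle$ and likewise for $B$. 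This produces the product structure of the asserted bound, one factor carrying $A$ and the other $B$, with the quadratic forms reduced to the first powers $\langle Ax,x\rangle,\langle Bx,x\rangle$.

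Next I would refine each scalar factor by Lemma \ref{lemma7} with $y=x$. For the positive operator $A$ the chain \eqref{fact4} collapses (its outer estimate being exactly McCarthy's inequality \eqref{mc1}) to
\[
\langle Ax,x\rangle^{p}\le\langle A^{p}x,x\rangle-\big\langle\,|A-\langle Ax,x\rangle 1_{\mathscr H}|^{p}x,x\big\rangle,
\]
and symmetrically for $B$; substituting these two refinements into the previous step bounds $|\langle ABx,x\rangle|^{2p}$, for every unit $x$, by the product of the two bracketed quantities. The contraction hypothesis $\|AB\|\le1$ is what keeps this argument honest at the level of exponents: together with $w(AB)\le\|AB\|$ it supplies the complementary estimate $w^{2p}(AB)\le\|AB\|^{2p}\le1$ in the regime where the norm prefactors generated by $A^{2}\le\|A\|A$ would otherwise be too large, so that the final operator power stays at $p$ rather than $2p$ and Lemma \ref{lemma7} is only ever invoked in its admissible range $p\ge2$.

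Finally I would pass to the supremum over $\|x\|=1$. On the one hand $\langle A^{p}x,x\rangle\le\|A^{p}\|$; on the other hand the subtracted term is bounded below, uniformly in $x$, by $\ell\big(|A-\|A\|1_{\mathscr H}|^{p}\big)$ through the infimum characterisation of $\ell(\cdot)$ recorded in the introduction, and similarly for $B$. Combining these yields the two factors $\|A^{p}\|-\ell(\cdots)$ and $\|B^{p}\|-\ell(\cdots)$ of the statement, while the choice $B=1_{\mathscr H}$ (a positive contraction precisely when $\|A\|\le1$, which forces the $B$-factor to equal $1$) gives the displayed ``in particular'' inequality. I expect the main obstacle to be exactly this passage to the supremum: because each bracket depends on $x$ through the shift $\langle Ax,x\rangle 1_{\mathscr H}$, the supremum of the product does not factor as a product of suprema without loss, so the delicate point is to justify the simultaneous use of the upper bound $\langle A^{p}x,x\rangle\le\|A^{p}\|$ and the $x$-independent lower bound for the refinement term -- that is, to show that replacing the variable scalar $\langle Ax,x\rangle$ by the fixed $\|A\|$ inside $\ell(\cdot)$ does not destroy the inequality, and to track the interaction of these bounds with the norm factors absorbed via the contraction hypothesis.
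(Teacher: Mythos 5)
The decisive gap is in the step that separates $A$ from $B$. From $|\langle ABx,x\rangle|\le\|Ax\|\,\|Bx\|$ and $A^{2}\le\|A\|A$ you obtain $|\langle ABx,x\rangle|^{2}\le\|A\|\,\|B\|\,\langle Ax,x\rangle\langle Bx,x\rangle$, so every subsequent estimate carries the parasitic factor $\|A\|^{p}\|B\|^{p}$, which does not appear in the statement. The contraction hypothesis cannot absorb this factor: $\|AB\|\le 1$ gives no control whatsoever on $\|A\|\,\|B\|$ (take $A=\diag(t,0)$, $B=\diag(0,t)$: then $AB=0$ is a contraction while $\|A\|\,\|B\|=t^{2}$ is arbitrarily large). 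Your proposed repair --- ``in the regime where the prefactors are too large, use $w^{2p}(AB)\le\|AB\|^{2p}\le 1$'' --- is not an argument: to conclude the theorem that way you would have to show that the product $\left[\|A^{p}\|-\ell(\cdot)\right]\left[\|B^{p}\|-\ell(\cdot)\right]$ is at least $1$ in that regime, which you never do and which does not follow from anything you have established; moreover the two ``regimes'' are never even delimited. The paper uses the contraction hypothesis in a completely different, structural way: since $\|AB\|\le 1$, Proposition I.3.5 of \cite{B} makes $\left[\begin{smallmatrix} I & B^{*}A^{*}\\ AB & I\end{smallmatrix}\right]$ positive, from which the paper concludes that $T=\left[\begin{smallmatrix} A & B^{*}A^{*}\\ AB & B\end{smallmatrix}\right]$ is positive, and Lemma \ref{lemma4} with $C=AB$ then yields the prefactor-free, two-vector bound $|\langle ABx_{1},x_{2}\rangle|^{2}\le\langle Ax_{1},x_{1}\rangle\langle Bx_{2},x_{2}\rangle$. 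That block-matrix positivity argument is exactly the idea your proposal is missing; without it (or some substitute) the exponents and constants simply do not come out right.

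The second problem is one you candidly flag but do not solve: after applying Lemma \ref{lemma7} with equal vectors, the refinement term depends on the vector through the scalar $\langle Ax,x\rangle 1_{\mathscr H}$, and you leave open how to replace that variable scalar by $\|A\|$ before passing to the supremum. A proof cannot end with its main analytic difficulty declared open. For the record, the paper performs precisely this replacement --- it substitutes $\sup_{\|x_{1}\|=1}\langle Ax_{1},x_{1}\rangle=\|A\|$ inside the modulus and then bounds the supremum of the product by the product of (supremum of the leading term minus infimum of the subtracted term), arriving at $\left[\|A^{p}\|-\ell\left(|A-\|A\|1_{\mathscr H}|^{p}\right)\right]\left[\|B^{p}\|-\ell\left(|B-\|B\|1_{\mathscr H}|^{p}\right)\right]$; this monotonicity step is asserted there rather than justified, but in your write-up it is not even asserted: it is an acknowledged hole. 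So your instinct about where the delicacy lies is correct, yet your proposal stops exactly where the paper's proof does its final work, and the part you do carry out (the separation step) deviates from the paper in a way that breaks the claimed bound.
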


\begin{proof}
    Let us first prove that $T=\left[ {\begin{array}{*{20}c} A & {B^*A^* }  \\AB & B
    \\\end{array}} \right] \in  \mathscr{B}\left( \mathscr{H}\oplus \mathscr{H}\right)$ is positive.
Since $AB$ is contraction, then by Proposition I.3.5 (\cite{B}, p.
10), $ \left[ {\begin{array}{*{20}c}
    I & {B^* A^* }  \\
    {AB} & I  \\
    \end{array}} \right]$ is positive. Thus,
\begin{align*}
T = \left[ {\begin{array}{*{20}c}
    A & {B^* A^* }  \\
    {AB} & B  \\
    \end{array}} \right] = \left[ {\begin{array}{*{20}c}
    A & 0  \\
    0 & B  \\
    \end{array}} \right] + \left[ {\begin{array}{*{20}c}
    0 & {B^* A^* }  \\
    {AB} & 0  \\
    \end{array}} \right]
\end{align*}
is positive since $A,B\ge0$.

Now, let    ${\bf{x}} = \left( {\begin{array}{*{20}c}
    x_1  \\
    x_2  \\
    \end{array}} \right)
$ in $\mathscr{H} \oplus\mathscr{H} $, such that
$\left\|x_1\right\|^2+\left\|x_2 \right\|^2=1$. Since $AB$ is
contraction then $\left[ {\begin{array}{*{20}c} A & {B^*A^* }
\\AB & B
    \\\end{array}} \right]$  is positive. Therefore, by setting $C=AB$ in Lemma \ref{lemma4}, and this implies that
\begin{align}
\left| {\left\langle
    {ABx_1,x_2}
    \right\rangle } \right|^{2p}  &\le \left\langle{Ax_1,x_1}
\right\rangle^p \left\langle{Bx_2,x_2} \right\rangle^p
\end{align}
If we wish setting $x_1=x_2$ and employing the first inequality in
Lemma \ref{lemma7}, we get
    \begin{align*}
    \left| {\left\langle{ABx_1,x_2} \right\rangle } \right|^{2p}  &\le \left\langle{Ax_1,x_1}\right\rangle^p \left\langle{Bx_2,x_2}\right\rangle^p
    \\
    &\le \left( \left\langle { A^p x_1,x_1} \right\rangle- \left\langle{\left| {A - \left\langle    { A x_1,x_1} \right\rangle 1_{\mathcal{H}} } \right|^px_1,x_1} \right\rangle \right)
    \\
    &\qquad\times\left(\left\langle { B^p x_2,x_2} \right\rangle- \left\langle{\left| {B - \left\langle { B x_2,x_2} \right\rangle 1_{\mathcal{H}} } \right|^px_2,x_2} \right\rangle \right)
    \\
    &\le   \left( \left\langle { A^p x_1,x_1} \right\rangle- \left\langle{\left| {A - \sup_{\|x_1\|=1}\left\langle  { A x_1,x_1} \right\rangle 1_{\mathcal{H}} } \right|^px_1,x_1} \right\rangle \right)
    \\
    &\qquad \times\left(\left\langle { B^p x_2,x_2} \right\rangle- \left\langle{\left| {B - \sup_{\|x_2\|=1}\left\langle    { B x_2,x_2} \right\rangle 1_{\mathcal{H}} } \right|^px_2,x_2} \right\rangle \right)
    \end{align*}
Taking the supremum over $x_1,x_2 \in \mathscr{H}$, we observe
that
\begin{align*}
&\sup_{\|x_1\|=\|x_2\|=1}\left| {\left\langle{ABx_1,x_2}    \right\rangle } \right|^{2p} \\
&\le\sup_{\|x_1\|=\|x_2\|=1} \left\{ \left[\left\langle { A^p
x_1,x_1} \right\rangle- \left\langle{\left| {A -\sup_{\|x_1\|=1}
\left\langle { A x_1,x_1} \right\rangle 1_{\mathcal{H}} }
\right|^px_1,x_1} \right\rangle \right] \right.
\\
&\qquad\left.\times\left[ \left\langle { B^p x_2,x_2}
\right\rangle- \left\langle{\left| {B -
\sup_{\|x_2\|=1}\left\langle  { B x_2,x_2} \right\rangle
1_{\mathcal{H}} } \right|^px_2,x_2} \right\rangle \right] \right\}
\\
&\le \sup_{\|x_1\| =1}    \left\langle { A^p x_1,x_1}
\right\rangle- \inf_{\|x_1\|=1}\left\langle{\left| {A -
\sup_{\|x_1\|=1}\left\langle  { A x_1,x_1} \right\rangle
1_{\mathcal{H}} } \right|^px_1,x_1} \right\rangle
\\
&\qquad \times\sup_{\|x_1\| =1}  \left\langle { B^p x_2,x_2}
\right\rangle-\inf_{\|x_2\|=1} \left\langle{\left| {B -
\sup_{\|x_2\|=1}\left\langle   { B x_2,x_2} \right\rangle
1_{\mathcal{H}} } \right|^px_2,x_2} \right\rangle
 \\
&= \left[\left\| {A^p } \right\| - \ell \left( {\left\| {\left|
{\left( {A - \left\| A \right\|} \right)} \right|^p } \right\|}
\right)\right]\times \left[\left\| {B^p } \right\| - \ell \left(
{\left\| {\left| {\left( {B - \left\| B \right\|} \right)}
\right|^p } \right\|} \right)\right]
\end{align*}
which completes the proof.
\end{proof}

As we have seen, the positivity assumption  of $T=\left[
{\begin{array}{*{20}c}
    A & {B^* A^* }  \\
    {AB} & B  \\
    \end{array}} \right]$ in Theorem \ref{thm4} is essential. A more general case could be obtained for general operators $A,B$ without any contractivity assumptions.

\begin{corollary}
\label{cor4}Let  $A,B\in \mathscr{B}\left( \mathscr{H}\right)^+$
such that $T=\left[ {\begin{array}{*{20}c}
    A & {C^* }  \\
    {C} & B  \\
    \end{array}} \right]$ is positive.  Then
    \begin{multline}
    w^{2p}\left(C\right)\\
    \le \left[\left\| {A^p } \right\| - \ell \left( {\left\| {\left| {\left( {A - \left\| A \right\|} \right)} \right|^p } \right\|} \right)\right]\times \left[\left\| {B^p } \right\| - \ell \left( {\left\| {\left| {\left( {B - \left\| B \right\|} \right)} \right|^p } \right\|} \right)\right]
    \end{multline}
    for all $p\ge2$.  In particular, we have
    \begin{align*}
    w^{2p}\left(C\right)\le \left[\left\| {A^p } \right\| - \ell \left( {\left\| {\left| {\left( {A - \left\| A \right\|} \right)} \right|^p } \right\|} \right)\right].
    \end{align*}
Moreover, in special case for $C=A$, we have
\begin{align*}
w^{2p}\left(A\right)\le \left[\left\| {A^p } \right\| - \ell
\left( {\left\| {\left| {\left( {A - \left\| A \right\|} \right)}
\right|^p } \right\|} \right)\right].
\end{align*}
\end{corollary}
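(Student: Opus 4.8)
The statement is precisely Theorem~\ref{thm4} with the specific product $AB$ replaced by an arbitrary off-diagonal entry $C$, and with the positivity of the $2\times2$ block operator promoted from a conclusion to a hypothesis. Accordingly, the plan is to retrace the proof of Theorem~\ref{thm4}, simply deleting the opening paragraph there (the invocation of Proposition~I.3.5 of \cite{B} that was used to manufacture positivity out of the contraction assumption) and starting instead from the assumed positivity of $T=\left[\begin{array}{cc}A & C^*\\ C & B\end{array}\right]$.

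The first step is to feed this positivity into Lemma~\ref{lemma4} with off-diagonal block $C$: positivity of $T$ is equivalent to
\[
\left|\left\langle Cx_1,x_2\right\rangle\right|^2\le\left\langle Ax_1,x_1\right\rangle\left\langle Bx_2,x_2\right\rangle
\]
for all $x_1,x_2\in\mathscr{H}$. Raising to the $p$-th power gives $\left|\left\langle Cx_1,x_2\right\rangle\right|^{2p}\le\left\langle Ax_1,x_1\right\rangle^{p}\left\langle Bx_2,x_2\right\rangle^{p}$, which is exactly the inequality that opens the computation in Theorem~\ref{thm4}, now with $C$ in place of $AB$.

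From here I would apply the first inequality of Lemma~\ref{lemma7} to each factor on its own, taking the two vectors there to coincide. Since $A,B\ge0$ the scalars $\left\langle Ax_1,x_1\right\rangle$ and $\left\langle Bx_2,x_2\right\rangle$ are nonnegative, so Lemma~\ref{lemma7} yields
\[
\left\langle Ax_1,x_1\right\rangle^{p}\le\left\langle A^{p}x_1,x_1\right\rangle-\left\langle\left|A-\left\langle Ax_1,x_1\right\rangle 1_{\mathscr{H}}\right|^{p}x_1,x_1\right\rangle
\]
together with the analogue for $B$ and $x_2$. Multiplying these two refinements bounds $\left|\left\langle Cx_1,x_2\right\rangle\right|^{2p}$ by the product of the two bracketed quantities.

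The last step, and the only genuinely delicate one, is the passage to the supremum over unit vectors. Inside each absolute-value term one replaces the vector-dependent scalar $\left\langle Ax_1,x_1\right\rangle$ by $\|A\|=\sup_{\|x\|=1}\left\langle Ax,x\right\rangle$, then distributes the supremum of the product via $\sup(fg)\le\sup f\cdot\sup g$ and handles each difference via $\sup(u-v)\le\sup u-\inf v$. Identifying $\sup_{\|x\|=1}\left\langle A^{p}x,x\right\rangle=\|A^{p}\|$ (legitimate since $A^{p}\ge0$) and the resulting infimum of the subtracted term as the quantity $\ell(\cdot)$, and using $w(C)\le\|C\|=\sup_{\|x_1\|=\|x_2\|=1}\left|\left\langle Cx_1,x_2\right\rangle\right|$, produces the claimed product estimate for $w^{2p}(C)$. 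The two ``in particular'' statements then follow by specialization: retaining only the $A$-factor (when the $B$-factor does not exceed one) for the first, and additionally setting $C=A$ for the second. I expect this final supremum/infimum bookkeeping---in particular arranging the replacement of $\left\langle Ax,x\right\rangle$ by $\|A\|$ so that the net inequality still points in the right direction---to be the main point requiring care, since every earlier step is a direct transcription of the argument for Theorem~\ref{thm4}.
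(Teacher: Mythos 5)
Your proposal matches the paper's own treatment, which disposes of this corollary in one line (``The proof follows from Lemma \ref{lemma5} and Theorem \ref{thm4}'') --- that is, exactly your plan of feeding the assumed positivity into the block-matrix characterization and rerunning the proof of Theorem \ref{thm4} verbatim with $C$ in place of $AB$. (The paper's citation of Lemma \ref{lemma5} is evidently a slip for Lemma \ref{lemma4}, which is the lemma you correctly invoke, and the delicate supremum/infimum step you flag at the end is carried out in the paper's proof of Theorem \ref{thm4} exactly as you describe it, with no further justification given there.)
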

\begin{proof}
The proof follows from Lemma \ref{lemma5} and Theorem \ref{thm4}.
\end{proof}

\begin{corollary}
\label{cor5}    Let  $T\in \mathscr{B}\left( \mathscr{H}\right)$
be any operator. Then
    \begin{multline}
    w^{2p}\left(T\right)
    \le \left[\left\| {T^p } \right\| - \ell \left( {\left\| {\left| {\left( {T - \left\| T \right\|} \right)} \right|^p } \right\|} \right)\right]\\\times \left[\left\| {\left(T^*\right)^p } \right\| - \ell \left( {\left\| {\left| {\left( {T^* - \left\| T \right\|} \right)} \right|^p } \right\|} \right)\right]
    \end{multline}
    for all $p\ge2$.
\end{corollary}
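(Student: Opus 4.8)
The plan is to deduce this fully general statement from Corollary \ref{cor4} by realizing the arbitrary operator $T$ as a corner of a positive $2\times 2$ operator matrix. The natural candidate is
\[
\mathbf{T} = \left[ \begin{array}{cc} |T| & T^* \\ T & |T^*| \end{array} \right] \in \mathscr{B}\left(\mathscr{H}\oplus\mathscr{H}\right),
\]
so that with the identifications $A = |T|$, $B = |T^*|$ and $C = T$ (hence $C^* = T^*$) the conclusion of Corollary \ref{cor4} produces a bound on $w^{2p}(T)$ of exactly the product shape appearing in the statement.

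First I would verify that $\mathbf{T}$ is positive. By Lemma \ref{lemma4} this is equivalent to
\[
\left| \langle Tx, y\rangle \right|^2 \le \langle |T| x, x\rangle\, \langle |T^*| y, y\rangle \qquad (x,y\in\mathscr{H}),
\]
which is precisely the mixed Schwarz inequality \eqref{eq2.4} of Lemma \ref{lemma3} taken at $\alpha = \tfrac12$. With positivity in hand, Corollary \ref{cor4} applies, and the range $p\ge 2$ keeps us inside the hypotheses of Lemma \ref{lemma7}, which is what drives Corollary \ref{cor4}.

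The main obstacle is reconciling this reduction with the precise diagonal data in the statement: the positive embedding above feeds the \emph{positive} operators $|T|$ and $|T^*|$ into the refined Cauchy--Schwarz inequality of Lemma \ref{lemma7}, whereas the statement records the sharper quantities $\|T^p\|$, $\|(T^*)^p\|$ together with $\left|(T-\|T\|)\right|^p$ and $\left|(T^*-\|T\|)\right|^p$ built from $T$ itself. To land on the stated form I would instead mimic the proof of Theorem \ref{thm4} directly: start from $\left|\langle Tx,x\rangle\right|^{2p}$, use $\langle Tx,x\rangle = \overline{\langle T^*x,x\rangle}$ to split it as $\left|\langle Tx,x\rangle\right|^{p}\left|\langle T^*x,x\rangle\right|^{p}$, and apply the refinement of Lemma \ref{lemma7} separately in the $T$-slot and the $T^*$-slot before taking the supremum over unit vectors, the suprema producing $\|T^p\|$ and $\|(T^*)^p\|$ and the infima producing the two $\ell(\cdot)$ terms, exactly as at the end of the proof of Theorem \ref{thm4}. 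The delicate point, and where the real work lies, is that Lemma \ref{lemma7} is stated only for positive operators, so this last step requires the general-operator analogue of that refined Cauchy--Schwarz inequality; securing (or correctly invoking) such an analogue for the non-self-adjoint $T$ is the crux of the argument.
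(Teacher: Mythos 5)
Your first two paragraphs are exactly the paper's proof: the paper notes that $\left[\begin{smallmatrix} |T| & T^{*} \\ T & |T^{*}| \end{smallmatrix}\right]$ is positive, citing \cite{FK4}, and then declares that the result follows from Corollary \ref{cor4} applied with $A=|T|$, $B=|T^{*}|$, $C=T$; your verification of positivity via Lemma \ref{lemma4} together with the mixed Schwarz inequality at $\alpha=\tfrac{1}{2}$ is the standard justification of that citation. The ``obstacle'' you then identify is real, but it is a defect of the paper's \emph{statement}, not of your reduction, and the paper does nothing to resolve it: Corollary \ref{cor4} with these choices produces $\||T|^{p}\|=\|T\|^{p}$ and $\||T^{*}|^{p}\|=\|T\|^{p}$ in the diagonal slots, and $\bigl||T|-\|T\|\bigr|^{p}$, $\bigl||T^{*}|-\|T\|\bigr|^{p}$ inside the $\ell(\cdot)$ terms, whereas the printed statement uses powers of $T$ and $T^{*}$ themselves; since $\|T^{p}\|\le\|T\|^{p}$ can be strict (take $T$ nilpotent), the literal statement does not follow from the block-matrix argument, and the paper simply conflates $T$ with $|T|$. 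Your proposed repair --- rerunning the proof of Theorem \ref{thm4} directly on $T$ and $T^{*}$ --- cannot be completed, for exactly the reason you flag yourself: Lemma \ref{lemma7} is proved only for positive operators, and no general-operator analogue is available in the paper. In short, if the corollary is read as the paper evidently intends (with $|T|$, $|T^{*}|$ on the right-hand side), your first reduction already is a complete proof identical to the paper's; the literal printed statement is not proved by the paper either, and your alternative route does not close that gap.
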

\begin{proof}
Since $\left[ {\begin{array}{*{20}c}
    |T| & {T^* }  \\
    {T} & |T^*|  \\
    \end{array}} \right]\in \mathscr{B}\left(\mathscr{H}\oplus \mathscr{H}\right)$ is positive  (see\cite{FK4}), then the result follows from Corollary \ref{cor4}.
\end{proof}

\section{Refinements of Numerical radius inequalities   for $n \times n$ matrix Operators}\label{sec4}
Several numerical radius type inequalities improving and refining
the inequality   \eqref{eq1.1} have been recently obtained by many
other authors see for example \cite{AF2}, \cite{D2}, \cite{D3},
\cite{FK1}--\cite{FK3}, \cite{OMN}. Among others, three important
facts concerning the  Numerical radius inequalities   of $n \times
n$  Operator matrices are obtained by different authors which are
grouped together, as follows:

    Let  $A=\left[A_{ij}\right]\in \mathscr{B}\left(\bigoplus _{i = 1}^n \mathscr{H}_i\right)$ such that $A_{ij}\in\mathscr{B}\left(\mathscr{H}_j, \mathscr{H}_i\right)$. Then
    \begin{align}
    \label{eq1.6} w\left(A\right)\le
    \left\{ \begin{array}{l}
    \omega \left( {\left[ {t_{ij}^{\left( 1 \right)} } \right]} \right),\qquad {\rm{Hou \,\&\, Du \,\,in}\,\,}\text{\cite{HD}} \\
    \\
    \omega \left( {\left[ {t_{ij}^{\left( 2 \right)} } \right]} \right) ,\qquad {\rm{BaniDomi \,\&\, Kittaneh \,\,in}\,\,} \text{\cite{BF}}\\
    \\
    \omega \left( {\left[ {t_{ij}^{\left( 3 \right)} } \right]} \right),\qquad  {\rm{AbuOmar \,\&\, Kittaneh \,\,in}\,\,} \text{\cite{AF1}}
    \end{array} \right.;
    \end{align}
    where
    \begin{align*}
    t_{ij}^{\left( 1 \right)}  = \omega \left( {\left[ {\left\| {T_{ij} } \right\|} \right]} \right),
\qquad
    t_{ij}^{\left( 2 \right)}  = \left\{ \begin{array}{l}
    \frac{1}{2}\left( {\left\| {T_{ii} } \right\| + \left\| {T_{ii}^2 } \right\|^{1/2} } \right),\,\,\,\,\,\,\,i = j
    \\
    \left\| {T_{ij} } \right\|,\qquad\qquad\qquad\,\,\,\,\,\,\,\,\,\,i \ne j
    \end{array} \right. ,
    \end{align*}
    and
    \begin{align*}
    t_{ij}^{\left( 3 \right)}  = \left\{ \begin{array}{l}
    \omega \left( {T_{ii} } \right),\,\,\,\,\,\,\,i = j
    \\
    \left\| {T_{ij} } \right\|,\,\,\,\,\,\,\,\,\,\, i \ne j
    \end{array} \right..
    \end{align*}

 In the next result we refine the latest bound $t_{ij}^{\left( 3 \right)}$ by adding a third part; which is the  numerical range of the sub-operators  on the  opposite diagonal.

\begin{theorem}
\label{thm5}Let  $A=\left[A_{ij}\right]\in
\mathscr{B}\left(\bigoplus _{i = 1}^n \mathscr{H}_i\right)$ such
that $A_{ij}\in\mathscr{B}\left(\mathscr{H}_j,
\mathscr{H}_i\right)$, and $f,g$ be as in Lemma \ref{lemma5}. Then
\begin{align}
w\left(A\right)\le  w\left( \left[a_{ij}\right] \right),
\end{align}
where
\begin{align*}
a_{ij}=  \left\{ \begin{array}{l}
  w\left( {A_{ij} } \right),\qquad\,\,\,\,\,\,\,\,\,j = i \,\,\,\,\,{\rm{and}}\,\,\,\,\,j \ne n - i + 1
  \\
w \left( {A_{ij } } \right),\qquad\,\,\,\,\,\,\,\,j = n - i + 1
\,\,\,\,\,{\rm{and}}\,\,\,\,\,j \ne i
\\
\left\| {A_{ij} } \right\|, \qquad\,\,\,\,\,\,\,\,\,\,\,j \ne n -
i + 1 \,\,\,\,{\rm{and}}\,\,\,\,\,j \ne i
\end{array} \right..
\end{align*}
\end{theorem}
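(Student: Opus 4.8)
The plan is to run the same entrywise comparison that underlies the Hou--Du and Abu-Omar--Kittaneh bounds in \eqref{eq1.6}, reducing the numerical radius of the operator matrix $A$ to that of the nonnegative scalar matrix $[a_{ij}]$. Fix a unit vector $\mathbf{x}=(x_1,\ldots,x_n)\in\bigoplus_{i=1}^n\mathscr{H}_i$, so that $\sum_{i=1}^n\|x_i\|^2=1$, and expand $\langle A\mathbf{x},\mathbf{x}\rangle=\sum_{i,j=1}^n\langle A_{ij}x_j,x_i\rangle$. Introducing the auxiliary unit vector $\xi=(\|x_1\|,\ldots,\|x_n\|)^{\mathsf T}\in\mathbb{C}^n$, I would aim for the chain
\[
\left|\langle A\mathbf{x},\mathbf{x}\rangle\right|\le\sum_{i,j=1}^n\left|\langle A_{ij}x_j,x_i\rangle\right|\le\sum_{i,j=1}^n a_{ij}\|x_j\|\,\|x_i\|=\langle [a_{ij}]\xi,\xi\rangle\le w\left([a_{ij}]\right),
\]
where the last step uses that every $a_{ij}\ge0$, so $\langle[a_{ij}]\xi,\xi\rangle$ is real and nonnegative and is therefore dominated by $w([a_{ij}])\|\xi\|^2=w([a_{ij}])$. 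Taking the supremum over $\mathbf{x}$ then yields $w(A)\le w([a_{ij}])$, so the whole theorem reduces to the single family of termwise estimates $\left|\langle A_{ij}x_j,x_i\rangle\right|\le a_{ij}\|x_j\|\,\|x_i\|$.

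Two of the three cases are routine and reproduce exactly the bound $t^{(3)}_{ij}$. On the main diagonal ($j=i$, off the anti-diagonal) the definition of the numerical radius gives $\left|\langle A_{ii}x_i,x_i\rangle\right|\le w(A_{ii})\|x_i\|^2=a_{ii}\|x_i\|^2$. For a generic position that is neither diagonal nor anti-diagonal, the Cauchy--Schwarz inequality gives $\left|\langle A_{ij}x_j,x_i\rangle\right|\le\|A_{ij}\|\,\|x_j\|\,\|x_i\|=a_{ij}\|x_j\|\,\|x_i\|$. Neither of these needs any hypothesis beyond boundedness, and the entire content of the refinement therefore lives on the anti-diagonal.

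The hard part is the anti-diagonal case $j=n-i+1$ with $i\neq j$, where one must replace the operator norm by the numerical radius, i.e.\ establish $\left|\langle A_{ij}x_j,x_i\rangle\right|\le w(A_{ij})\|x_j\|\,\|x_i\|$; here the identification $\mathscr{H}_i=\mathscr{H}_{n-i+1}$ is needed even for $w(A_{ij})$ to be meaningful. This is the step I expect to be the main obstacle. For $i\neq j$ the arguments $x_i$ and $x_j$ range independently, and the per-entry inequality cannot hold as stated: taking $A_{ij}$ a rank-one nilpotent, so that $w(A_{ij})=\tfrac12\|A_{ij}\|$, and choosing $x_j,x_i$ that attain $\|A_{ij}\|$ already violates it. Consequently the anti-diagonal cannot be treated one entry at a time, and any legitimate improvement must come from a \emph{global} pairing of the symmetric positions $(i,n-i+1)$ and $(n-i+1,i)$. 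The natural device is the flip unitary $J(x_1,\ldots,x_n)=(x_n,\ldots,x_1)$, which carries the anti-diagonal of $A$ onto the main diagonal of $JA$; one would try to recast the combined contribution $\langle A_{ij}x_j,x_i\rangle+\langle A_{ji}x_i,x_j\rangle$ as the quadratic form of a single operator on $\mathscr{H}_i\oplus\mathscr{H}_j$ and bound it by a numerical radius, presumably invoking the mixed Schwarz inequality of Lemma~\ref{lemma5} with the functions $f,g$ (which explains why they appear in the hypothesis but not in $[a_{ij}]$). I would invest essentially all of the effort in showing that this pairing returns precisely the scalar weight $w(A_{ij})$ at each anti-diagonal slot; reconciling the genuinely mixed term $\langle A_{ji}x_i,x_j\rangle$ with the numerical-radius bound for $A_{ij}$ is the delicate crux, and the central entry (when $n$ is odd and $i=(n+1)/2$, simultaneously diagonal and anti-diagonal) must be split off and handled by the diagonal estimate.
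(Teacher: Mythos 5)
Your reduction to entrywise estimates, and your treatment of the diagonal and generic off-diagonal positions, coincide with what the paper does. Your diagnosis of the anti-diagonal case is also correct --- and it is fatal, not merely an obstacle to be engineered around. No ``global pairing'' can rescue the statement, because the theorem itself is false. Take $n=2$, $\mathscr{H}_1=\mathscr{H}_2=\mathbb{C}^2$, $A_{11}=A_{21}=A_{22}=0$ and $A_{12}=\left[\begin{smallmatrix}0&1\\0&0\end{smallmatrix}\right]$, so that $w(A_{12})=\tfrac12$ and $\|A_{12}\|=1$. Then $\langle A\mathbf{x},\mathbf{x}\rangle=\langle A_{12}x_2,x_1\rangle$, and maximizing over $\|x_1\|^2+\|x_2\|^2=1$ gives $w(A)=\tfrac12\|A_{12}\|=\tfrac12$, while the theorem's matrix is $[a_{ij}]=\left[\begin{smallmatrix}0&1/2\\0&0\end{smallmatrix}\right]$ with $w\left([a_{ij}]\right)=\tfrac14$. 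Thus the claimed inequality $w(A)\le w\left([a_{ij}]\right)$ fails. The pairing idea cannot help because the companion block $A_{21}$ is zero and contributes nothing: the supremum of $|\langle A_{12}x_2,x_1\rangle|$ over independent unit vectors is genuinely $\|A_{12}\|$, which equals $2\,w(A_{12})$ for a rank-one nilpotent. The same example refutes the $2\times2$ corollary that follows Theorem \ref{thm5} in the paper.

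For comparison with the source: the paper's proof commits exactly the per-entry error you isolated, hidden behind an index slip. In its third displayed inequality, the anti-diagonal terms $|\langle A_{i,n-i+1}x_{n-i+1},x_i\rangle|$ are silently replaced by the diagonal quadratic forms $|\langle A_{k_ik_i}x_{k_i},x_{k_i}\rangle|$ (where $k_i=n-i+1$), which are then bounded by $w(A_{k_ik_i})\|x_{k_i}\|^2$; the genuine anti-diagonal bilinear terms are never estimated at all, and the estimate that would be needed to make the argument close, namely $|\langle A_{ij}x_j,x_i\rangle|\le w(A_{ij})\|x_j\|\,\|x_i\|$ for $i\ne j$, is precisely the one you showed to be false. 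So your proposal is incomplete only because the missing step does not exist: with independent vectors the best entrywise constant at an off-diagonal position is $\|A_{ij}\|$, and inserting that collapses the statement back to the Abu-Omar--Kittaneh bound $t^{(3)}_{ij}$ in \eqref{eq1.6}, which is where the genuine content stops.
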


\begin{proof}
Let $x = \left[ {\begin{array}{*{20}c}
    {x_1 } & {x_2 } &  \cdots  & {x_n }  \\
    \end{array}} \right]^T \in \bigoplus _{i = 1}^n \mathscr{H}_i$ with $\|x\|=1$.  For simplicity setting $k_i=n-i+1$, then we have
    \begin{align*}
    \left| {\left\langle {Ax,x} \right\rangle } \right| &= \left| {\sum\limits_{i,j = 1}^n {\left\langle {A_{ij} x_j ,x_i } \right\rangle } } \right| \\
    &\le \sum\limits_{i,j = 1}^n {\left| {\left\langle {A_{ij} x_j ,x_i } \right\rangle } \right|}  \nonumber\\
    &\le \sum\limits_{i  = 1}^n {\left| {\left\langle {A_{ii} x_i ,x_i } \right\rangle } \right|} + \sum\limits_{i=1}^n {\left| {\left\langle {A_{k_i k_i} x_{k_i} ,x_{k_i} } \right\rangle } \right|}+ \sum\limits_{j \ne i,k_i }^n {\left| {\left\langle {A_{ij} x_j ,x_i } \right\rangle } \right|} \nonumber\\
    &\le  \sum\limits_{i = 1}^n {   \omega \left( {A_{ii} } \right)\left\| {x_i } \right\|^2 } + \sum\limits_{i = 1}^n {    \omega \left( {A_{k_ik_i} } \right)\left\| {x_{k_i} } \right\|^2 } +\sum\limits_{j \ne i}^n {\left\| {A_{ij} } \right\|\left\| {x_i } \right\|\left\| {x_j } \right\|}
    \\
    &\le \sum_{i=1}^n {a_{ij}\left\| {x_i } \right\|\left\| {x_j } \right\|}
    \\
    &= \left\langle {\left[ {a_{ij} } \right]y,y} \right\rangle
    \end{align*}
    where $y=\left( {\begin{array}{*{20}c}{\left\| {x_1 } \right\|} & {\left\| {x_2 } \right\|} &  \cdots  & {\left\| {x_n } \right\|}  \\  \end{array}} \right)^T$. Taking the supremum over $x  \in \bigoplus \mathscr{H}_i$, we obtain the desired result.
\end{proof}

\begin{corollary}
    If $\bf{A}=\left[ {\begin{array}{*{20}c}
        {A_{11} } & {A_{12} }  \\
        {A_{21} } & {A_{22} }  \\
        \end{array}} \right]$ in $ \mathscr{B}\left(\mathscr{H}_1+\mathscr{H}_2\right)$ , then
    \begin{align}
    w\left( \left[ {\begin{array}{*{20}c}
        {A_{11} } & {A_{12} }  \\
        {A_{21} } & {A_{22} }  \\
        \end{array}} \right]\right)
    &\le  \frac{1}{2}\left(w\left( {A_{11} } \right) + w\left( {A_{22} } \right) \right.\\&\qquad\left.+ \sqrt {\left( {w\left( {A_{11} } \right) - w\left( {A_{22} } \right)} \right)^2  + \left( {w\left( {A_{12} } \right) + w\left( {A_{21} } \right)} \right)^2 }   \right)\nonumber
    \end{align}
\end{corollary}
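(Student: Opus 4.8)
The plan is to obtain this as the $n=2$ specialization of Theorem~\ref{thm5}, followed by an explicit evaluation of the numerical radius of a $2\times 2$ scalar matrix with nonnegative entries. First I would instantiate Theorem~\ref{thm5} with $n=2$. Here the shift $k_i=n-i+1$ gives $k_1=2$ and $k_2=1$, so the two diagonal positions $(1,1),(2,2)$ fall into the first case and the two anti-diagonal positions $(1,2),(2,1)$ fall into the second case of the definition of $a_{ij}$; none of the four entries lands in the ``off both diagonals'' case. Consequently $a_{11}=w(A_{11})$, $a_{22}=w(A_{22})$, $a_{12}=w(A_{12})$, $a_{21}=w(A_{21})$, and Theorem~\ref{thm5} yields
\[
w(\mathbf{A}) \le w\!\left(\begin{bmatrix} w(A_{11}) & w(A_{12}) \\ w(A_{21}) & w(A_{22})\end{bmatrix}\right).
\]
Thus the corollary reduces to bounding the numerical radius of the matrix $N=\begin{bmatrix} a & b \\ c & d\end{bmatrix}$ whose entries $a=w(A_{11})$, $d=w(A_{22})$, $b=w(A_{12})$, $c=w(A_{21})$ are nonnegative reals.

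Next I would estimate $w(N)$ directly from the definition. For a unit vector $v=(x,y)^T\in\mathbb{C}^2$ one has $\langle Nv,v\rangle = a|x|^2 + d|y|^2 + b\,y\bar x + c\,x\bar y$. Writing $x=re^{i\theta_1}$, $y=se^{i\theta_2}$ with $r=|x|\ge 0$, $s=|y|\ge 0$ and $\phi=\theta_2-\theta_1$, the two cross terms combine to $rs\!\left(be^{i\phi}+ce^{-i\phi}\right)$, whose modulus is at most $rs(b+c)$ precisely because $b,c\ge 0$. The triangle inequality applied to the nonnegative diagonal part $ar^2+ds^2$ and this cross term then gives $|\langle Nv,v\rangle| \le ar^2 + ds^2 + (b+c)rs$. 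The right-hand side is the Rayleigh quotient $\langle S\xi,\xi\rangle$ of the real symmetric matrix $S=\begin{bmatrix} a & (b+c)/2 \\ (b+c)/2 & d\end{bmatrix}$ at the real unit vector $\xi=(r,s)^T$, hence it is bounded by $\lambda_{\max}(S)$. Computing the larger root of the characteristic polynomial of $S$ gives $\lambda_{\max}(S)=\tfrac12\big(a+d+\sqrt{(a-d)^2+(b+c)^2}\big)$; taking the supremum over $v$ and substituting back the values of $a,b,c,d$ produces the asserted bound.

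The argument is short, and the only place requiring care is the passage from the complex quadratic form to the real symmetric one. One must use that $b,c\ge 0$, so that $|be^{i\phi}+ce^{-i\phi}|\le b+c$, and that $a,d\ge 0$, so that the diagonal part $ar^2+ds^2$ is nonnegative and may be split off by the triangle inequality. This nonnegativity is exactly what is guaranteed by the entries being numerical radii $w(A_{ij})$, and it is the feature that makes the off-diagonal contributions add up as $b+c$ rather than partially cancel. I would expect this phase-alignment step to be the main (mild) obstacle; the remainder is just the eigenvalue formula for a $2\times 2$ symmetric matrix. I also note that the bound obtained is in fact an equality, since the Perron eigenvector of $S$ has nonnegative components and so the supremum is attained with $r,s\ge 0$, although only the stated inequality is needed here.
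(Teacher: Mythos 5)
Your proposal is correct and takes essentially the same route as the paper: both specialize Theorem \ref{thm5} to $n=2$ and then evaluate the numerical radius of the resulting entrywise nonnegative $2\times 2$ scalar matrix as the largest eigenvalue of its symmetric part, namely $\tfrac{1}{2}\bigl(a+d+\sqrt{(a-d)^2+(b+c)^2}\bigr)$. The only difference is presentational: you prove the symmetrization step (phase alignment of the off-diagonal terms and the Rayleigh-quotient bound) from scratch, whereas the paper invokes the known fact that $w(N)\le r\bigl(\tfrac{N+N^{T}}{2}\bigr)$ for matrices with nonnegative entries and then computes the spectral radius.
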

\begin{proof}
From Theorem \ref{thm5}, we have
    \begin{align*}
    w\left( \left[ {\begin{array}{*{20}c}
        {A_{11} } & {A_{12} }  \\
        {A_{21} } & {A_{22} }  \\
        \end{array}} \right]\right) &\le w
    \left( \left[ {\begin{array}{*{20}c}
        \begin{array}{l}
         w\left( {A_{11} } \right)   \\
        \\
        \end{array} & \begin{array}{l}
         w\left( {A_{12} } \right) \\
        \\
        \end{array}  \\
         w\left( {A_{21} } \right)  &  w\left( {A_{22} } \right) \\
        \end{array}}\right] \right)
    \\
    &= \frac{1}{2}r     \left( \left[ {\begin{array}{*{20}c}
        \begin{array}{l}
        w\left( {A_{11} } \right)   \\
        \\
        \end{array} & \begin{array}{l}
        w\left( {A_{12} } \right) +w\left( {A_{21} } \right)\\
        \\
        \end{array}  \\
        w\left( {A_{21} } \right) +w\left( {A_{12} } \right) &  w\left( {A_{22} } \right) \\
        \end{array}}\right] \right)
    \\
    &=  \frac{1}{2}\left(w\left( {A_{11} } \right) + w\left( {A_{22} } \right)\right.
    \\
    &\qquad\left. + \sqrt {\left( {w\left( {A_{11} } \right) - w\left( {A_{22} } \right)} \right)^2  + \left( {w\left( {A_{12} } \right) + w\left( {A_{21} } \right)} \right)^2 }  \right)
    \end{align*}
  which proves the result.
\end{proof}

Using the fact that for any $n\times n$ matrix
$A=\left[A_{ij}\right]$ such that $A_{ij}\ge0$. Then
$w\left(A\right)\le  r\left(
\frac{\left[A_{ij}\right]+\left[A_{ji}\right]}{2} \right)$, we may
state Theorem \ref{thm5} as follows:
\begin{corollary}
\label{cor7}    Let  $A=\left[A_{ij}\right]\in
\mathscr{B}\left(\bigoplus _{i = 1}^n \mathscr{H}_i\right)$ and
$f,g$ be as in Lemma \ref{lemma5}. Then
    \begin{align}
    w\left(A\right)\le  r\left( \left[b_{ij}\right] \right)
    \end{align}
    where
    \begin{align*}
    b_{ij}=  \left\{ \begin{array}{l}
    w\left( {A_{ij} } \right),\qquad\qquad\qquad\qquad \,\,\,\,\,\,\,j = i \,\,\,\,\,{\rm{and}}\,\,\,\,\,j \ne n - i + 1 \\
    \\
    \frac{1}{2}\left(w \left( {A_{ij}}\right)+w \left( {A_{ji}}\right)\right), \qquad \,\,\,\,\,\,j=n-i+1  \,\,\,\,\,{\rm{and}}\,\,\,\,\,j\ne i\\
    \\
    \frac{1}{2}\left(\left\| {A_{ij} } \right\|+\left\| {A_{ji} } \right\|\right), \qquad\,\,\,\,\,\,\,\,\,\,\,\,\,j \ne i, n - i + 1 \\
    \end{array} \right.
    \end{align*}
\end{corollary}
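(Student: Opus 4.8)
The plan is to feed the output of Theorem~\ref{thm5} into the entrywise-nonnegative-matrix fact quoted just before the statement. First I would apply Theorem~\ref{thm5} to obtain
\begin{align*}
w\left(A\right) \le w\left(\left[a_{ij}\right]\right),
\end{align*}
where $[a_{ij}]$ is the scalar $n\times n$ matrix defined there. Since the numerical radius $w(\cdot)$ and the operator norm $\|\cdot\|$ are both nonnegative, each entry $a_{ij}$ is a nonnegative real number, so $[a_{ij}]$ is entrywise nonnegative and the quoted fact applies:
\begin{align*}
w\left(\left[a_{ij}\right]\right) \le r\left(\frac{\left[a_{ij}\right]+\left[a_{ji}\right]}{2}\right).
\end{align*}
The entire problem then reduces to identifying the symmetrized matrix $\tfrac12\left(\left[a_{ij}\right]+\left[a_{ji}\right]\right)$ with $[b_{ij}]$.

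I would carry out this identification by matching the three defining branches of $a_{ij}$ against those of $b_{ij}$. On the main diagonal ($j=i$ with $j\ne n-i+1$) symmetrization changes nothing, so the entry stays $w(A_{ii})=b_{ii}$. Off both diagonals ($j\ne i$ and $j\ne n-i+1$) the $(i,j)$ and $(j,i)$ entries are both of norm type, giving $\tfrac12(\|A_{ij}\|+\|A_{ji}\|)=b_{ij}$. Both of these are immediate once the symmetrization is written out.

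The only step that needs genuine care --- and hence the main obstacle --- is the antidiagonal branch $j=n-i+1$ with $j\ne i$. Here one must verify that \emph{both} $a_{ij}$ and $a_{ji}$ are assigned by the numerical-radius branch of Theorem~\ref{thm5}, rather than one by $w$ and the other by $\|\cdot\|$. This is precisely the observation that the antidiagonal is invariant under transposition: $j=n-i+1$ forces $i=n-j+1$, so the reflected index pair $(j,i)$ again lies on the antidiagonal, whence $a_{ij}=w(A_{ij})$, $a_{ji}=w(A_{ji})$, and the symmetrized entry equals $\tfrac12(w(A_{ij})+w(A_{ji}))=b_{ij}$. With all three branches matched we conclude $\tfrac12\left(\left[a_{ij}\right]+\left[a_{ji}\right]\right)=\left[b_{ij}\right]$, and chaining the two displayed inequalities yields $w(A)\le r\left(\left[b_{ij}\right]\right)$, as claimed.
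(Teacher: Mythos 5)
Your proposal is correct and is exactly the argument the paper intends: Corollary~\ref{cor7} is presented as an immediate consequence of Theorem~\ref{thm5} together with the quoted fact that $w\left(\left[a_{ij}\right]\right)\le r\left(\tfrac{1}{2}\left(\left[a_{ij}\right]+\left[a_{ji}\right]\right)\right)$ for entrywise nonnegative matrices. Your verification that the antidiagonal is invariant under transposition (so both $a_{ij}$ and $a_{ji}$ fall in the numerical-radius branch and the symmetrized matrix is precisely $\left[b_{ij}\right]$) is the one detail the paper leaves implicit, and you have carried it out correctly.
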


\begin{theorem}
\label{thm6}    Let  $A=\left[A_{ij}\right]\in
\mathscr{B}\left(\bigoplus _{i = 1}^n \mathscr{H}_i\right)$ and
$f,g$ be as in Lemma \ref{lemma5}. Then
    \begin{align}
    w\left(A\right)\le  w\left( \left[c_{ij}\right] \right),
    \end{align}
    where
    \begin{align*}
    c_{ij}= \left\{ \begin{array}{l}
    \frac{1}{2}\left\| {f^2 \left( {\left| {A_{ii} } \right|} \right) + g^2 \left( {\left| {A_{ii}^* } \right|} \right)} \right\|,\qquad\,\,\,\,\,\,\,\,\,j = i \,\,\,\,\,{\rm{and}}\,\,\,\,\,j \ne n - i + 1 \\
    \\
    \frac{1}{2}\left\| {f^2 \left( {\left| {A_{ij } } \right|} \right) + g^2 \left( {\left| {A_{ij}^* } \right|} \right)} \right\|,\,\qquad\,\,\,\, j =n-i+1  \,\,\,\,\,{\rm{and}}\,\,\,\,\,j \ne i  \\
    \\
    \left\| {A_{ij} } \right\|,\qquad\qquad\qquad\qquad\qquad\,\,\,\,\,\,\,\,\,j \ne i, n-i+1 \\  \\
    \end{array} \right..
    \end{align*}

\end{theorem}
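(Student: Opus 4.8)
The plan is to mimic the proof of Theorem \ref{thm5} line by line, the only difference being the scalar majorant attached to the diagonal and anti-diagonal blocks, which this time comes from the mixed Schwarz inequality \eqref{kittaneh.ineq} rather than from the numerical radius. Fix a unit vector $x=(x_1,\dots,x_n)^T\in\bigoplus_{i=1}^n\mathscr{H}_i$ and abbreviate $k_i=n-i+1$. Starting from
\[
|\langle Ax,x\rangle|=\Big|\sum_{i,j=1}^n\langle A_{ij}x_j,x_i\rangle\Big|\le\sum_{i,j=1}^n|\langle A_{ij}x_j,x_i\rangle|,
\]
I would split the double sum into the three index classes that define $c_{ij}$: the proper diagonal $j=i$, the proper anti-diagonal $j=k_i\ne i$, and all remaining entries.

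For a diagonal block I would reproduce verbatim the estimate behind the \emph{in particular} clause of Theorem \ref{thm1}: applying \eqref{kittaneh.ineq} with $B=1_{\mathscr{H}}$ and $y=x_i$, and then the Power-Mean inequality \eqref{PMI} with $\alpha=\tfrac12$, gives
\[
|\langle A_{ii}x_i,x_i\rangle|\le\|f(|A_{ii}|)x_i\|\,\|g(|A_{ii}^*|)x_i\|\le\tfrac12\big\langle\big(f^2(|A_{ii}|)+g^2(|A_{ii}^*|)\big)x_i,x_i\big\rangle\le c_{ii}\|x_i\|^2 .
\]
The same chain applied to an anti-diagonal block $A_{ik_i}$ is intended to produce $|\langle A_{ik_i}x_{k_i},x_i\rangle|\le c_{ik_i}\|x_i\|\|x_{k_i}\|$, while for the off-diagonal entries the elementary bound $|\langle A_{ij}x_j,x_i\rangle|\le\|A_{ij}\|\,\|x_i\|\,\|x_j\|=c_{ij}\|x_i\|\|x_j\|$ is all that is needed.

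Collecting the three groups yields $|\langle Ax,x\rangle|\le\sum_{i,j=1}^n c_{ij}\|x_i\|\|x_j\|=\langle[c_{ij}]y,y\rangle$, where $y=(\|x_1\|,\dots,\|x_n\|)^T$ is again a unit vector. Taking the supremum over all such $x$ and using $\langle[c_{ij}]y,y\rangle\le w([c_{ij}])$ then delivers $w(A)\le w([c_{ij}])$, as claimed.

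The main obstacle is the anti-diagonal estimate. In contrast to the diagonal case, $\langle A_{ik_i}x_{k_i},x_i\rangle$ pairs two \emph{different} vectors, so the same-vector trick behind Theorem \ref{thm1} does not apply directly; \eqref{kittaneh.ineq} returns only the product $\|f(|A_{ik_i}|)x_{k_i}\|\,\|g(|A_{ik_i}^*|)x_i\|$, and the delicate point is to dominate this by the single scalar $c_{ik_i}\|x_i\|\|x_{k_i}\|$ while preserving the mixed product $\|x_i\|\|x_{k_i}\|$ rather than a sum of squares. As in Theorem \ref{thm5}, this step tacitly requires the anti-diagonal blocks to be square operators (so that $f^2(|A_{ik_i}|)+g^2(|A_{ik_i}^*|)$ is a genuine positive operator on a single space, of norm $2c_{ik_i}$), and it is here that the argument must be carried out with care.
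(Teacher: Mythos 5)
You follow exactly the paper's route --- the same three-way splitting of $\sum_{i,j}\langle A_{ij}x_j,x_i\rangle$ into diagonal, anti-diagonal and remaining terms, Kato's inequality \eqref{kittaneh.ineq} with $B=1_{\mathscr{H}}$ plus the Power-Mean inequality \eqref{PMI} on the distinguished blocks, and $|\langle A_{ij}x_j,x_i\rangle|\le\|A_{ij}\|\,\|x_i\|\,\|x_j\|$ elsewhere --- and your diagonal and strictly off-diagonal estimates are fine. But the step you flag as ``delicate'' and leave open, namely
\begin{equation*}
\bigl|\langle A_{ik_i}x_{k_i},x_i\rangle\bigr|\;\le\;\tfrac{1}{2}\bigl\|f^2\left(|A_{ik_i}|\right)+g^2\left(|A^*_{ik_i}|\right)\bigr\|\,\|x_i\|\,\|x_{k_i}\|,
\end{equation*}
is a genuine gap, and it cannot be closed, because this inequality is false. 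Take $f(t)=g(t)=t^{1/2}$ and $C=\left[\begin{smallmatrix}0&1\\0&0\end{smallmatrix}\right]$ on $\mathbb{C}^2$, so that $|C|=\diag(0,1)$, $|C^*|=\diag(1,0)$ and $\tfrac{1}{2}\|f^2(|C|)+g^2(|C^*|)\|=\tfrac{1}{2}$; with $u=e_2$, $v=e_1$ one gets $|\langle Cu,v\rangle|=1>\tfrac{1}{2}\|u\|\|v\|$. The obstruction is exactly the one you describe: with two different vectors, \eqref{kittaneh.ineq} yields the product $\|f(|C|)u\|\,\|g(|C^*|)v\|$, and since $f^2(|C|)$ and $g^2(|C^*|)$ may be supported on orthogonal subspaces, this product is not controlled by $\tfrac{1}{2}\|f^2(|C|)+g^2(|C^*|)\|\,\|u\|\|v\|$; the same-vector trick behind Theorem \ref{thm1} has no analogue here.

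Moreover, the same example kills the theorem itself, so no repair of this one step can succeed. With $n=2$, $\mathscr{H}_1=\mathscr{H}_2=\mathbb{C}^2$, $A_{12}=C$ and $A_{11}=A_{21}=A_{22}=0$, the operator $A$ is a rank-one nilpotent, hence $w(A)=\tfrac{1}{2}$, while $[c_{ij}]=\left[\begin{smallmatrix}0&1/2\\0&0\end{smallmatrix}\right]$ gives $w\left([c_{ij}]\right)=\tfrac{1}{4}<w(A)$. For what it is worth, the paper's own proof does not close this step either: its ``anti-diagonal'' sum is written as $\sum_i|\langle A_{k_ik_i}x_{k_i},x_{k_i}\rangle|$, i.e.\ diagonal blocks again, paired with a single vector (a re-indexed copy of the diagonal sum), so the true anti-diagonal terms $\langle A_{ik_i}x_{k_i},x_i\rangle$ remain inside $\sum_{j\ne i}$ with weight $\|A_{ik_i}\|$, and the final identification of the resulting bound with $\langle[c_{ij}]y,y\rangle$ is unjustified --- indeed in the example $\|A_{12}\|=1$ while $c_{12}=\tfrac{1}{2}$. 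So your instinct was right, but the correct conclusion is stronger than ``needs care'': with the weight $\tfrac{1}{2}\|f^2(|A_{ik_i}|)+g^2(|A^*_{ik_i}|)\|$ on the anti-diagonal the statement is false, and those entries must be replaced by $\|A_{ik_i}\|$ (only the diagonal blocks, where both arguments of the inner product coincide, can carry the refined weight, as in \eqref{eq1.6}).
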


\begin{proof}
Let $x = \left[ {\begin{array}{*{20}c}
    {x_1 } & {x_2 } &  \cdots  & {x_n }  \\
    \end{array}} \right]^T \in \bigoplus _{i = 1}^n \mathscr{H}_i$ with $\|x\|=1$.  For simplicity setting $k_i=n-i+1$, then we have
    \begin{align*}
    \left| {\left\langle {Ax,x} \right\rangle } \right| &= \left| {\sum\limits_{i,j = 1}^n {\left\langle {A_{ij} x_j ,x_i } \right\rangle } } \right| \\
    &\le \sum\limits_{i,j = 1}^n {\left| {\left\langle {A_{ij} x_j ,x_i } \right\rangle } \right|}  \nonumber\\
    &\le \sum\limits_{i  = 1}^n {\left| {\left\langle {A_{ii} x_i ,x_i } \right\rangle } \right|} + \sum\limits_{i=1}^n {\left| {\left\langle {A_{k_ik_i} x_{k_i} ,x_{k_i} } \right\rangle } \right|}+ \sum\limits_{j \ne i }^n {\left| {\left\langle {A_{ij} x_j ,x_i } \right\rangle } \right|} \nonumber
\\
    &\le  \sum\limits_{i = 1}^n {\left\langle {f^2 \left( {\left| {A_{ii} } \right|} \right)x_i ,x_i } \right\rangle ^{1/2} \left\langle {g^2 \left( {\left| {A^*_{ii} } \right|} \right)x_i ,x_i } \right\rangle ^{1/2}}
    \\
    &\qquad+\sum\limits_{i = 1}^n {\left\langle {f^2 \left( {\left| {A_{k_ik_i} } \right|} \right)x_{k_i} ,x_{k_i} } \right\rangle ^{1/2} \left\langle {g^2 \left( {\left| {A^*_{k_ik_i} } \right|} \right)x_{k_i} ,x_{k_i} } \right\rangle ^{1/2}}
\\
    &\qquad+ \sum\limits_{j \ne i }^n {\left| {\left\langle {A_{ij} x_j ,x_i } \right\rangle } \right|}
\\
    &\le\frac{1}{2}\left[\sum\limits_{i = 1}^n {\left\langle {f^2 \left( {\left| {A_{ii} } \right|} \right)x_i ,x_i } \right\rangle + \left\langle {g^2 \left( {\left| {A^*_{ii} } \right|} \right)x_i ,x_i } \right\rangle} \right] \\
    &\qquad+\frac{1}{2}\left[\sum\limits_{i = 1}^n {\left\langle {f^2 \left( {\left| {A_{k_ik_i} } \right|} \right)x_{k_i} ,x_{k_i} } \right\rangle + \left\langle {g^2 \left( {\left| {A^*_{k_ik_i} } \right|} \right)x_{k_i} ,x_{k_i} } \right\rangle } \right]
\\
    &\qquad+ \sum\limits_{j \ne i }^n {\left| {\left\langle {A_{ij} x_j ,x_i } \right\rangle } \right|}
          \end{align*}
  \begin{align*}
    &\le\frac{1}{2} \sum\limits_{i = 1}^n {\left\langle {\left[ f^2 \left( {\left| {A_{ii} } \right|} \right)+ g^2 \left( {\left| {A^*_{ii} } \right|} \right)\right]x_i ,x_i } \right\rangle}  \\
    &\qquad+\frac{1}{2}\sum\limits_{i = 1}^n {\left\langle {\left[ f^2 \left( {\left| {A_{k_ik_i} } \right|} \right)+g^2 \left( {\left| {A^*_{k_ik_i} } \right|} \right)\right] x_{k_i} ,x_{k_i} } \right\rangle   }
    \\
    &\qquad+ \sum\limits_{j \ne i }^n {\left| {\left\langle {A_{ij} x_j ,x_i } \right\rangle } \right|}
    \\
    &= \left\langle {\left[ {c_{ij} } \right]y,y} \right\rangle
    \end{align*}
    where $y=\left( {\begin{array}{*{20}c}{\left\| {x_1 } \right\|} & {\left\| {x_2 } \right\|} &  \cdots  & {\left\| {x_n } \right\|}  \\  \end{array}} \right)^T$. Taking the supremum over $x  \in \bigoplus \mathscr{H}_i$, we obtain the desired result.
\end{proof}

As we did in Corollary \ref{cor7} we may restate Theorem
\ref{thm6} in terms of spectral radius as follows:
\begin{corollary}
    Let  $A=\left[A_{ij}\right]\in \mathscr{B}\left(\bigoplus _{i = 1}^n \mathscr{H}_i\right)$ and $f,g$ be as in Lemma \ref{lemma5}. Then
    \begin{align}
    w\left(A\right)\le  r\left( \left[d_{ij}\right] \right),
    \end{align}
    where
    \begin{align*}
    d_{ij}= \left\{ \begin{array}{l}
    \frac{1}{2}\left\| {f^2 \left( {\left| {A_{ij} } \right|} \right) + g^2 \left( {\left| {A^*_{ij} } \right|} \right)} \right\|,  \qquad\qquad\qquad\,\,\,\,\,\,\,\,\,j = i \,\,\,\,\,{\rm{and}}\,\,\,\,\,j \ne n - i + 1 \\
    \\
    \frac{1}{4}\left[\left\| {f^2 \left( {\left| {A_{ij } } \right|} \right) + g^2 \left( {\left| {A^*_{ij} } \right|} \right)} \right\|\right.\\ \qquad\qquad\left.+\left\| {f^2 \left( {\left| {A_{ji } } \right|} \right) + g^2 \left( {\left| {A^*_{ji} } \right|} \right)} \right\|\right], \qquad\,\,\,\, j =n-i+1  \,\,\,\,\,{\rm{and}}\,\,\,\,\,j \ne i  \\
    \\
    \frac{1}{2}\left(\left\| {A_{ij} } \right\|+\left\| {A_{ji} } \right\|\right),   \qquad\qquad\qquad\qquad\qquad\,\,\,\,\,j \ne i, n-i+1 \\  \\
    \end{array} \right..
    \end{align*}

\end{corollary}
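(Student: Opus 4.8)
The plan is to deduce this last Corollary directly from Theorem \ref{thm6}, symmetrizing the dominating scalar matrix exactly as Corollary \ref{cor7} was obtained from Theorem \ref{thm5}. First I would invoke Theorem \ref{thm6} to get $w(A)\le w([c_{ij}])$, where $[c_{ij}]$ is the $n\times n$ matrix of nonnegative real numbers specified there. Since every entry $c_{ij}\ge 0$, I would then apply the fact recalled just before Corollary \ref{cor7}, namely that a matrix $[m_{ij}]$ with $m_{ij}\ge 0$ satisfies $w([m_{ij}])\le r\bigl(\tfrac{[m_{ij}]+[m_{ji}]}{2}\bigr)$, to obtain
\[
w(A)\le w\bigl([c_{ij}]\bigr)\le r\left(\frac{[c_{ij}]+[c_{ji}]}{2}\right).
\]

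The substance of the proof is then to identify the symmetrized matrix $\frac12([c_{ij}]+[c_{ji}])$ with $[d_{ij}]$, which is an entrywise computation. The one point requiring care is that the index transposition $(i,j)\mapsto(j,i)$ preserves each of the three cases defining $c_{ij}$, so that averaging never mixes the cases. I would verify this invariance first: a diagonal position $j=i$ maps to itself; an anti-diagonal position $j=n-i+1$ maps to $(j,i)$, which again satisfies $i=n-j+1$ and is therefore anti-diagonal; and a position with $j\ne i,\ n-i+1$ maps to $(j,i)$ with $i\ne j,\ n-j+1$, hence stays off both diagonals. This three-way stability is the key observation and the only place where the combinatorics of the anti-diagonal enters.

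With the invariance established, the three entries of $[d_{ij}]$ follow at once. On the diagonal, $\frac12(c_{ii}+c_{ii})=c_{ii}=\frac12\|f^2(|A_{ii}|)+g^2(|A_{ii}^*|)\|$. On the anti-diagonal, both $c_{ij}$ and $c_{ji}$ are given by the second branch of $c$, so averaging produces the prefactor $\frac14$ together with the sum of the two norm terms appearing in $d_{ij}$. Off the two diagonals, averaging $\|A_{ij}\|$ and $\|A_{ji}\|$ yields $\frac12(\|A_{ij}\|+\|A_{ji}\|)$. Combining these identifications with the spectral-radius bound displayed above gives $w(A)\le r([d_{ij}])$, as claimed. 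I expect no real obstacle beyond this bookkeeping; the only degenerate configuration is the central index when $n$ is odd, where $i=n-i+1$ makes a position simultaneously diagonal and anti-diagonal, but there the first branch of $c_{ij}$ applies and its symmetrization is again trivial, so the stated formula remains consistent.
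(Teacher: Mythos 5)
Your proposal is correct and matches the paper's intended argument exactly: the paper gives no separate proof for this corollary, stating only that it follows from Theorem \ref{thm6} ``as we did in Corollary \ref{cor7},'' i.e.\ by combining $w(A)\le w([c_{ij}])$ with the fact $w([m_{ij}])\le r\bigl(\tfrac{[m_{ij}]+[m_{ji}]}{2}\bigr)$ for entrywise nonnegative matrices and checking that the symmetrization of $[c_{ij}]$ is $[d_{ij}]$. Your verification that transposition preserves the three index cases (and your remark on the central index when $n$ is odd, a degeneracy the paper's case definitions silently ignore) supplies precisely the bookkeeping the paper leaves implicit.
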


\centerline{}\centerline{}

 \textbf{Acknowledgment:} The author wish to thank the referee for his fruitful comments and careful reading of the original manuscript of this work that have implemented the final version of this work.

\end{document}